\journalname{} 
\begin{document}

\title{First passage times of two-dimensional correlated processes: analytical results for the Wiener process and a numerical method for diffusion processes.} 
\titlerunning{First passage times for bivariate diffusion processes}

\author{Laura Sacerdote \and Massimiliano Tamborrino\and Cristina Zucca}
\authorrunning{Sacerdote, Tamborrino, Zucca} 

\institute{L. Sacerdote \and C. Zucca \at Department of Mathematics \lq\lq G. Peano\rq\rq, University of Turin, V. Carlo Alberto 10, Turin, Italy\\
\email{laura.sacerdote@unito.it,cristina.zucca@unito.it}
\and M.Tamborrino \at Institute for Stochastics, Johannes Kepler University, 
Altenbergerstra{\ss}e 69, 4040 Linz, Austria \\
Tel.: +43 732 2468 4168\\
Fax: +43 732 2468 4162 \\
\email{massimiliano.tamborrino@jku.at}}
\date{}
\maketitle

\begin{abstract}
Given a two-dimensional correlated diffusion process, we determine the joint density of the first passage times of the process to some constant boundaries. This quantity depends on the joint density of the first passage time of the first crossing component and of the position of the second crossing component before its crossing time. First we show that these densities are solutions of a system of Volterra-Fredholm first kind integral equations. Then we propose a numerical algorithm to solve it and we describe how to use the algorithm  to approximate the joint density of the first passage times. The convergence of the method is theoretically proved for bivariate diffusion processes.
We derive explicit expressions for these and other quantities of interest in the case of a bivariate Wiener process, correcting previous misprints appearing in the literature. Finally we illustrate the application of the method through a set of examples. \medskip

{\bf This paper replaces the old version \lq\lq First passage times of two-dimensional correlated diffusion processes: analytical and numerical methods \rq\rq. Please site this article as L. Sacerdote, M. Tamborrino, and C. Zucca. First passage times of two-dimensional correlated processes: analytical results for the Wiener process and a numerical method for diffusion processes, Journal of Computational and Applied Mathematics,  296, 275-292, 2016.}\medskip

\keywords{Bivariate Wiener process\and  Error Analysis \and  Hitting time \and System of Volterra-Fredholm integral equations \and  Bivariate Kolmogorov forward equation}
 \subclass{60G40 \and 60J60 \and 65R20 \and 60J65 \and 60J70}
\end{abstract}
\medskip

\medskip

\section{Introduction and motivation}\label{section1}
The first passage time (FPT) problem of  univariate stochastic processes through boundaries is relevant in different fields, e.g. economics \citep{econ1}, engineering \citep{eng}, finance \citep{applbook,finance}, neuroscience \citep{ReviewSac,Tpsycho}, physics \citep{bookFPT}, psychology \citep{Navarro} and reliability theory \citep{rel,PieperDom}. For  one-dimensional  processes, the FPT problem has been widely analytically investigated both for constant and time dependent boundaries \citep{Alili, Martin,ReviewSac}, yielding explicit expressions for the FPT density of the Wiener process \cite{coxMiller}, of a special case of the Ornstein Uhlenbeck (OU) process \cite{Ricciardi}, of the Cox-Ingersoll-Ross (also known as Feller or square-root) process  \cite{CapRic,Sacerdote1990} and of some processes which can be obtained through suitable measure or space-time transformations of the previous processes \cite{Alili,CapRic,RicciardiW}.  For most of the processes arising from applications, closed form expressions are not available but it was proved that the FPT distribution function is solution of  integral equations. This has determined the development of ad hoc numerical methods for the solution of Volterra integral equations of the first and second types arising from  both the direct and the inverse FPT problem \cite{Buonocore,Gob,Milst,RicciardiNip,Telve,Zucca}.

Results for the FPT problem of bivariate processes are still scarce and fragmentary. Analytic results are available for bivariate FPTs through specific surfaces \citep{Dicrescenzo,Lachal}, for the FPTs of a Wiener and of an integrated process \citep{Dynkin,Gard,Lefebvre1,Lefebvre2} and for the FPTs of two correlated Wiener processes with zero \cite{Buckholtz, Iyengar,Shao} or positive drift \cite{Domine} in presence of absorbing boundaries. 

The main goal of this paper is  to investigate the bivariate joint distribution of the hitting times of a bivariate diffusion process. A new difficulty arises with respect to the univariate case: the dynamics of the process after the first crossing depend on the type of considered boundaries. Indeed the first component attaining its boundary can stop its evolution, be absorbed there or pursue its evolution depending on whether the boundaries are \textit{killing}, \textit{absorbing} or \textit{crossing}, respectively. In all cases, the slowest component evolves till its passage time. The different boundary conditions are defined in Section \ref{Section2} together with some further mathematical background. The different scenarios are studied in Section \ref{section3}, where we also derive the joint FPT densities in the three cases.
Conscious of the important role of Volterra Integral equations in the univariate FPT problem, here we extend the approach used in the one dimensional case \cite{Buonocore} or in the FPT problem of a component of a Gauss Markov process \cite{Benedetto}. These quantities depend on the joint densities of the second crossing component before its FPT and the FPT of the first crossing component, which we show to be the solutions of a system of Volterra-Fredholm first kind integral equations \cite{Fre-Volt}. In Section \ref{section4} we propose a numerical method to solve the system and we describe how to obtain the joint FPT density using our algorithm. Since the dynamics of the process before the first crossing time are the same for all types of boundaries, the proposed method can always be used. In Section \ref{section5} we prove the convergence of the algorithm and we study its order of convergence. A useful feature of the proposed algorithm is that it allows to avoid the prohibitive computational effort required for simulating  the joint density of the FPTs \cite{Zhou}. Indeed it allows to switch from a Monte Carlo simulation method \cite{Metzler} to a deterministic numerical method.

To numerically illustrate the convergence of the method, we consider two correlated Wiener processes and compare the theoretical and the numerical results. The desired joint density of the second crossing component before its FPT and the FPT of the first crossing component can be obtained starting from the joint density of the process constrained to be below the boundaries, which is available in \cite{Domine,Iyengar,Metzler}. The formulas for the driftless case presented in \cite{Iyengar} contain misprints, which have been independently corrected in \cite{Domine} and \cite{Metzler}. In \cite{Iyengar} the case with drift is also considered, but unfortunately some expressions present further misprints. Since we have not been able to locate correct results elsewhere in the literature, in Section \ref{section6}  we correct these formulas and determine other quantities of interest. In particular we calculate the joint density of the position of the process constrained to be below the boundaries, of the FPTs both with and without drift and of the second crossing component before its FPT and the FPT of the first crossing component. A comparison of this last density with its numerical approximation obtained using the algorithm is presented in Section \ref{section7}. There we also illustrate the application of our method to approximate the joint FPT density of a bivariate OU process with correlated components. This is particularly relevant in neuroscience, where FPTs are used to describe neural action potentials (spikes) and multivariate OU processes can be used to model neural networks, as recently discussed in \cite{TSJ}.

\section{Mathematical background}\label{Section2}
Consider a two-dimensional time homogeneous diffusion
process $\mathbf{X}=\left\{ (X_1,X_2)'(t); t>t_0\right\}$, solution of the stochastic differential equation
\begin{equation}\label{diff}
d\mathbf{X}(t)=\bm \mu(\mathbf{X}(t))dt+\bm \Sigma\left(\bm X(t)\right) d\mathbf{W}(t), \qquad \bm{X}(t_0)=\bm{x}_0=\left(
x_{01,}x_{02}\right)', \qquad t>t_0,
\end{equation}
where $'$ indicates vector transpose. Here $\bm W(t)$ is a two-dimensional standard Wiener process, the $\mathbb{R}^2$-valued function $\bm\mu$ and the $\mathbb{R}^2\times \mathbb{R}^2$ matrix-valued  function  $\bm\Sigma$ are assumed to be defined and measurable on $\mathbb{R}^2$ and all the conditions on existence and uniqueness of the solution are satisfied \cite{arnold}.

Define the random variable
\begin{equation*}
T_{i}=\inf \{t>t_0:X_{i}\left( t\right) >B_{i}\} \qquad i=1,2,
\end{equation*}
i.e. the FPT of $X_i$ through the constant boundary $B_i>x_{0i}$. We denote by $T=\min(T_1,T_2)$ the random variable corresponding to the first exit time of $\mathbf{X}$ from the strip $(-\infty,B_1)\times (-\infty,B_2)$. Our goal is to determine the joint probability density function (pdf) of $(T_1,T_2)$ for a process $\mathbf{X}$ originated in $\mathbf{y}=(y_1,y_2)$ at time $s$, defined by
\[
f_{(T_1,T_2)}(t_1,t_2|\bm y, s):=\frac{\partial^2}{\partial t_1 \partial t_2} \mathbb{P}(T_1<t_1, T_2<t_2|\bm X(s)=\bm y).
\]
Throughout the paper we consider the following densities for $i,j=1,2, i\neq j$ and $s<t$:
\begin{itemize}
\item[$\bullet$]  joint pdf of the components of the process $\mathbf{X}$ up to time $T$, defined by
\[
f^a_{\mathbf{X}}(\mathbf{x},t|\mathbf{y},s):=\frac{\partial^2}{\partial x_1\partial x_2} \mathbb{P}\left(\mathbf{X}(t)<\mathbf{x},T>t \vert \mathbf{X}(s)=\mathbf{y}\right);
\]
\item[$\bullet$] conditional pdf of $X_i$ given $X_j$ up to  time $T$, defined by
\begin{eqnarray*}
&&
f^a_{X_i\vert X_j}(x_i,t\vert x_j,t;\mathbf{y},s):=\frac{\partial}{\partial x_i} \mathbb{P}(X_i(t)< x_i,T_i>t\vert X_j(t)=x_j, T_j>t,\mathbf{X}(s)=\mathbf{y});\\
 \end{eqnarray*}
 \item[$\bullet$] conditional pdf of $X_i$ up to time $T_i$ given $T_j$, defined by
\begin{eqnarray*}
f^a_{X_{i}\left\vert T_{j}\right. }\left( x_i \vert t;\mathbf{y},s
\right):=\frac{\partial}{\partial x_i}\mathbb{P}\left( X_i(t)< x_i,T_i>t \vert T_{j}=t, \mathbf{X}(s) =\mathbf{y} \right);
\end{eqnarray*}
\item[$\bullet$] joint pdf of $X_i$ up to time $T_i$ and of $T_j$, defined by
\begin{eqnarray*}
f^a_{(X_{i}, T_{j})}\left( x_i,t\vert \bm y,s\right):=\frac{\partial^2}{\partial x_i \partial t} \mathbb{P}\left( X_i(T_{j}) <  x_i, T_i>t, T_{j} < t \vert \bm X(s) =\bm y , T>s\right);
\end{eqnarray*}
\item[$\bullet$] joint pdf of $X_i$ and $T_j$, defined by
\[
f_{(X_i, T_{j})}\left(x_i, t| \bm y,s\right):=\frac{\partial^2}{\partial x_i\partial t}\mathbb{P}\left( X_{i}\left( T_{j}\right) < x_i, T_{j} <t \vert \bm X \left( s\right) =\bm y \right);
\]
\item[$\bullet$] marginal pdf of $T_i$, defined by 
\[
f_{T_i}(t\vert \mathbf{y},s):=\frac{\partial}{\partial t}\mathbb{P}(T_i< t\vert \mathbf{X}(s)=\mathbf{y}).
\]
\end{itemize}
Finally we denote the survival cumulative distribution function of $\mathbf{X}$ by $\bar{F}_{\mathbf{X}}(\mathbf{x}, t|\mathbf{y}, s)=\mathbb{P}(\mathbf{X}(t)>\mathbf{x}|\mathbf{X}(s)=\mathbf{y})$ and its transition pdf by $f_\mathbf{X}(\mathbf{x}, t|\mathbf{y}, s)$ for $s<t, \mathbf{x}, \mathbf{y}\in\mathbb{R}^2$. 
To simplify the notation, we omit to write the starting position when $\mathbf{y}=\mathbf{x}_0$, and the starting time when $s=t_0$. All the above densities are assumed to exist, to be well defined and either known in closed form or numerically evaluable.

\subsection{Behavior of the process in presence of different types of boundaries}
The behavior of the subthreshold process $\mathbf{X}$ up to time $T$ does not depend on the type of boundaries, while the dynamics of the process after time $T$ do.  Denote by $X_f$ and $X_s$ the first (and thus the fastest) and second (and thus the slowest) components to hit their boundaries $B_f$ and $B_s$ at time $T=T_f$ and $T_s$, respectively.  Throughout the paper we consider the next  three possible scenarios, as illustrated in Fig. \ref{fig}:
\begin{enumerate}
\item \emph{Killing boundaries}. At time $T_f$ the fastest component $X_f$ is killed while the slowest component $X_s$ pursues its evolution till time $T_s$. Thus after $T_f$ the process becomes univariate and $X_s$ does not depend on $X_f$ anymore.
\item \emph{Absorbing boundaries}. At time $T_f$ the fastest component $X_f$ is absorbed in $B_f$  while the slowest component $X_s$ pursues its evolution till time $T_s$. After $T$, the process is still bivariate but the component $X_f$ is constant, i.e. $X_f(u)=B_f$ for $u \in [T_f,T_s]$.
\item \emph{Crossing boundaries}. Both components continue to evolve according to \eqref{diff} also after time $T$.
\end{enumerate}

\begin{figure}[t!]
\centering
\includegraphics[width=.8\textwidth]{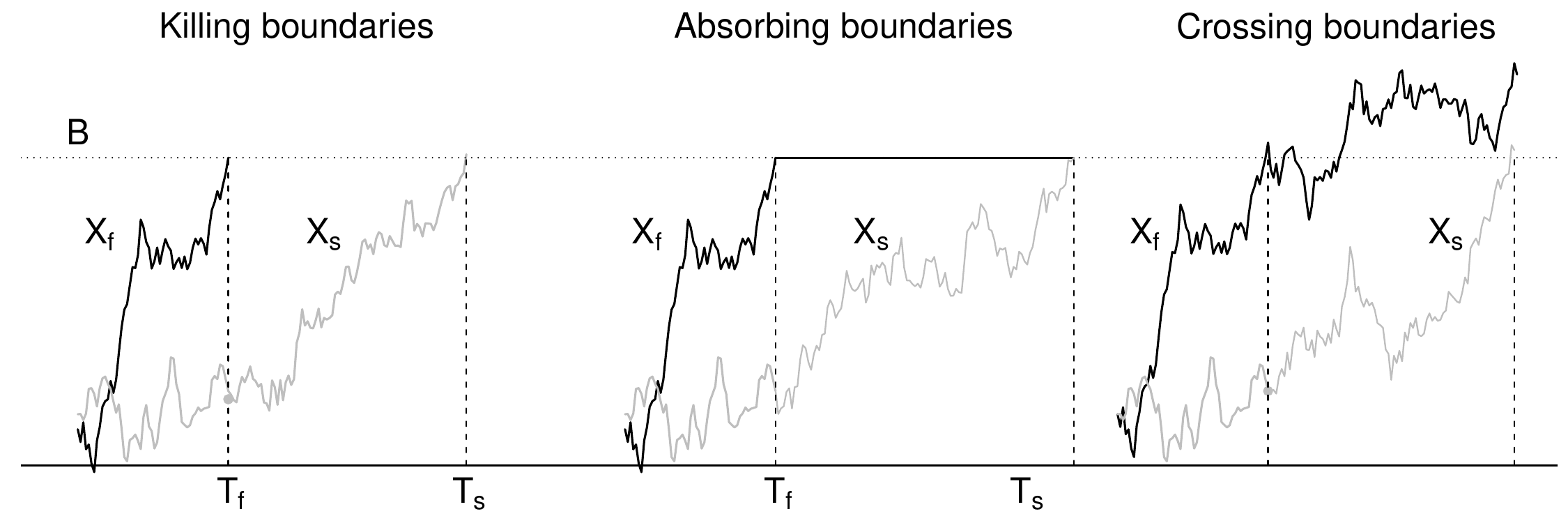}
\caption{Schematic representation of the dynamic of the bivariate diffusion process $\mathbf{X}$ in presence of killing (left figure), absorbing (central figure) or crossing constant boundaries (right figure) $B_1=B_2=B$. $X_f$ denotes the first (and thus the fastest) component hitting its boundary at time $T_f$, $X_s$ the second (and thus the slowest) to hit the boundary at time $T_s$. In presence of killing boundaries (left figure), only the slowest component $X_s$ evolves after time $T_f$ and the process becomes univariate. In presence of absorbing boundary (central figure), the fastest component $X_f$ is absorbed on its boundary $B_f$, while $X_s$ pursues its evolution until its FPT. Hence the process is still bivariate with a constant component $X_f(u)=B_f$ for $u\in[T_f,T_s]$. In presence of crossing boundaries (right figure), both components evolve until the time when both have reached their levels. }
\label{fig}
\end{figure}
\section{Joint distribution of $\left( T_{1},T_{2}\right) $}\label{section3}
Since the dynamics of $\mathbf{X}$ are the same up to $T_f$ but depend on the considered boundaries in $(T_f,T_s)$, the joint FPT density $f_{(T_1,T_2)}$ has three different  expressions as shown by the following 
\begin{theorem}\label{theo} 
When $T_1\neq T_2$, the joint density of $(T_1,T_2)$ is 
{\footnotesize{\begin{equation}\label{ProbTTa}
f_{(T_1, T_2)}(t_1,t_2)= \int_{-\infty }^{B_2} f_{T_2}(t_2\vert x_2,t_1)f^a_{(X_{2}, T_{1})}(x_{2},t_{1})dx_2\mathbbm{1}_{\{t_1<t_2\}}+ \int_{-\infty }^{B_1} f_{T_1}(t_1\vert x_1,t_2)f^a_{(X_{1}, T_{2})}(x_{1},t_{2})dx_1 \mathbbm{1}_{\{t_2<t_1\}},
\end{equation}}}
if the boundaries are killing,
{\footnotesize{\begin{equation}\label{ProbTT}
f_{(T_1, T_2)}(t_1,t_2)= \int_{-\infty }^{B_2} f_{T_2}(t_2\vert (B_1,x_2),t_1)f^a_{(X_{2}, T_{1})}(x_{2},t_{1})dx_2 \mathbbm{1}_{\{t_1<t_2\}}+ \int_{-\infty }^{B_1} f_{T_1}(t_1\vert (x_1,B_2),t_2)f^a_{(X_{1}, T_{2})}(x_{1},t_{2})dx_1 \mathbbm{1}_{\{t_2<t_1\}}
\end{equation}}}
if the boundaries are absorbing and
{\footnotesize{\begin{eqnarray}\label{ProbTT3}
f_{(T_1, T_2)}(t_1,t_2) &= &\int_{-\infty }^{B_2}\left( \int_{-\infty}^{\infty} f_{( X_1,T_2)}((x_1,B_2),t_2|(B_1,x_2), t_1)dx_1\right) f^a_{(X_2,T_1)} (x_2,t_1)dx_2 \mathbbm{1}_{\{t_1<t_2\}}\nonumber\\
&+&\int_{-\infty }^{B_1} \left(\int_{-\infty}^{\infty} f_{( X_2,T_1)}((B_1,x_2),t_1|(x_1,B_2), t_2)dx_2\right)f^a_{(X_1,T_2)} (x_1,t_2)dx_1 \mathbbm{1}_{\{t_2<t_1\}}
\end{eqnarray}}}
if the boundaries are crossing. Here $\mathbbm{1}_{A}$ denotes the indicator function of the set $A$. 
\end{theorem}
The proof of Theorem \ref{theo} is given in Appendix A. 

\begin{remark}
Note that \eqref{ProbTTa} is a particular case of \eqref{ProbTT} when $X_i$ does not depend on $X_j$ after time $T_j>T_i$.
\end{remark}
\begin{remark}
Since the behavior of $\mathbf{X}$ does not depend on the boundaries up to time $T$, all the pdfs of $f_{(T_1,T_2)}$ in \eqref{ProbTTa}-\eqref{ProbTT3} are functions of $f_{(X_2^a,T_1)}$  and $f^a_{(X_1, T_2)}$. These densities can be obtained as solutions of systems of Volterra-Fredholm integral equations. 
\end{remark}
\begin{theorem}\label{Volt}
In presence of crossing boundaries the densities $f^a_{(X_{1}, T_{2})}$ and $f^a_{(X_{2}, T_{1})}$ are solutions of the following system of Volterra-Fredholm first kind integral equations
\begin{subequations}\label{Volterra}
{\footnotesize{\begin{align}
\bar{F}_{\mathbf{X}}((x_1,B_2),t)&=\int_{t_0}^t \int_{-\infty}^{B_2} \bar{F}_{\mathbf{X}}((x_1,B_2),t\vert(B_1,y),\tau)f^a_{(X_{2}, T_{1})}\left( y, \tau\right)dy d\tau+\int_{t_0}^t \int_{-\infty}^{B_1}  \bar{F}_{\mathbf{X}}((x_1,B_2),t\vert(y,B_2),\tau)f^a_{(X_{1},T_{2})}\left( y, \tau\right)dy d\tau;  \label{Volterra1}\\
\bar{F}_{\mathbf{X}}((B_1, x_2),t)
&=\int_{t_0}^t \int_{-\infty}^{B_2} \bar{F}_{\mathbf{X}}((B_1, x_2),t\vert(B_1,y),\tau)f^a_{(X_{2}, T_{1})}\left( y, \tau\right)dy d\tau +\int_{t_0}^t \int_{-\infty}^{B_1} \bar{F}_{\mathbf{X}}((B_1, x_2),t\vert(y,B_2),\tau)f^a_{(X_{1}, T_{2}) }\left( y,\tau\right)dy d\tau,  \label{Volterra2} 
\end{align}}}
\end{subequations}
where $x_1>B_1$ and $x_2>B_2$. Moreover, it holds 
 \begin{equation}\label{Volterradensity}
f_{\mathbf{X}}(\mathbf{x},t)=\int^{t}_{t_0} \int_{-\infty}^{B_2}f_{\mathbf{X}}\left(\mathbf{x},t\vert(B_1,y),\tau\right)
f^a_{(X_{2},T_{1})}\left( y,\tau\right)
dy d\tau +\int^{t}_{t_0} \int_{-\infty}^{B_1}f_{\mathbf{X}}\left(\mathbf{x},t\vert(y,B_2),\tau\right) f^a_{(X_{1}, T_{2} )}\left( y,\tau\right)dy d\tau,
\end{equation}
which yields another system of Volterra-Fredholm first kind integral equations.
\end{theorem}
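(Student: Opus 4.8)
The plan is to derive the system \eqref{Volterra} by a first-passage decomposition resting on the strong Markov property of $\mathbf{X}$ at the first exit time $\min(T_1,T_2)$ from the strip $(-\infty,B_1)\times(-\infty,B_2)$. I would establish \eqref{Volterra1} in detail; equation \eqref{Volterra2} follows by the identical argument with the target point $(B_1,x_2)$ in place of $(x_1,B_2)$, and \eqref{Volterradensity} by replacing the survival function with the transition density throughout.

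First I would fix $x_1>B_1$ and rewrite the left-hand side probabilistically as $\bar{F}_{\mathbf{X}}((x_1,B_2),t)=\mathbb{P}(\{X_1(t)\ge x_1\}\cup\{X_2(t)\ge B_2\}\mid \mathbf{X}(t_0)=\mathbf{x}_0)$. Since $x_1>B_1>x_{01}$ and the paths of $\mathbf{X}$ are continuous, the sub-event $\{X_1(t)\ge x_1\}$ forces $T_1\le t$ while $\{X_2(t)\ge B_2\}$ forces $T_2\le t$; hence the whole event is contained in $\{\min(T_1,T_2)\le t\}$. This is the key observation making the decomposition exhaustive, and it is precisely why the survival function is evaluated at a point lying outside the strip.

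Next I would condition on the first exit. Up to a null event this occurs through exactly one boundary: either $X_1$ reaches $B_1$ first, i.e. $T_1<T_2$, in which case continuity gives $X_1(T_1)=B_1$ while the surviving component sits at $X_2(T_1)=y<B_2$; or symmetrically $X_2$ exits first. By the definition of the superscript-$a$ processes, the joint law of $(y,\tau)=(X_2(T_1),T_1)$ restricted to the first scenario has density $f_{(X_2^a,T_1)}(y,\tau)$, and that of $(X_1(T_2),T_2)$ in the second scenario has density $f_{(X_1^a,T_2)}(y,\tau)$. Applying the strong Markov property at $\min(T_1,T_2)$, the conditional probability of $\{X_1(t)\ge x_1\}\cup\{X_2(t)\ge B_2\}$ given exit at $(B_1,y)$ (resp. $(y,B_2)$) at time $\tau$ equals $\bar{F}_{\mathbf{X}}((x_1,B_2),t\mid (B_1,y),\tau)$ (resp. $\bar{F}_{\mathbf{X}}((x_1,B_2),t\mid (y,B_2),\tau)$). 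Integrating over the exit time $\tau\in[t_0,t]$ and the position $y$ of the surviving component, the law of total probability yields exactly \eqref{Volterra1}.

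I expect the main obstacle to lie in the rigorous justification of the first-exit decomposition rather than in the bookkeeping: one must argue that $\mathbb{P}(T_1=T_2)=0$ so the two scenarios are mutually exclusive, that path continuity forces the exiting component to sit exactly on its boundary at the exit instant, and that the strong Markov property can be cast in the conditional-density form used above so that each integrand factorises into a transition kernel times the exit density $f_{(X_2^a,T_1)}$ or $f_{(X_1^a,T_2)}$. These facts are underwritten by the standing assumption that all the densities in question exist and are well defined. A final point of care concerns \eqref{Volterradensity}: the same decomposition applies verbatim to $f_{\mathbf{X}}(\mathbf{x},t)$ provided $\mathbf{x}$ lies outside the strip, so that attaining $\mathbf{x}$ at time $t$ again forces a prior exit and the two terms capture every contributing path.
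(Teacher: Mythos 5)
Your proposal is correct and follows essentially the same route as the paper's Appendix~B: a first-passage decomposition over which component exits the strip first, the strong Markov property at that exit time, and integration against the exit densities $f_{(X_2^a,T_1)}$ and $f_{(X_1^a,T_2)}$. The only cosmetic difference is that the paper derives the identity for a general $\mathbf{x}$ with $x_1>B_1$, $x_2>B_2$ and then specializes to $(x_1,B_2)$ and $(B_1,x_2)$, obtaining \eqref{Volterradensity} by differentiation rather than by rerunning the decomposition for the density.
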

The proof of Theorem \ref{Volt} is given in Appendix B. 
\begin{remark}
Even if the system is written with respect to crossing boundaries, the solutions $f^a_{(X_i,T_j)}$ are the same for all scenarios and can be then plugged in \eqref{ProbTTa}-\eqref{ProbTT3} to obtain the joint pdf of $(T_1,T_2)$ for killing, absorbing and crossing boundaries, respectively. Different systems can be written depending on the type of boundaries, obviously yielding the same solution. For this reason, we do not write them here. Note also that the difficulties of the solutions do not change when different equations are considered. 
\end{remark}
 
\section{Numerical method}\label{section4}
When $\mathbf{X}$ is a multivariate Gaussian process, an explicit expression for $\bar F_\mathbf{X}$ is available \cite{arnold}. Analytical  and numerical expressions of $f_{\mathbf{X}}$ can be found in \cite{PlatenBL}.
 In general, an analytic solution of the system (\ref{Volterra}) is untenable for most of the diffusion processes.  For this reason,  we develop a suitable numerical method for its solution. Consider a two-dimensional time interval $[0,\Theta_1]\times[0,\Theta_2]$, with $\Theta_1,\Theta_2\in \mathbb{R}^+$.
For each component $i=1,2$, let $h_i$ and $r_i$ be the time and space discretization steps, respectively.
On $\{(-\infty,B_1)\times(-\infty,B_2)\times[0,\Theta_1]\times[0,\Theta_2]\}$ we introduce the partition $\{(y_{u_1},y_{u_2});t_{k_1},t_{k_2}\}$ where $t_{k_i}=k_ih_i$ is the time discretization and $y_{u_i}=B_i-u_ir_i$  is the space discretization for $k_i=0,\ldots,N_i, N_ih_i=\Theta_i, u_i\in \mathbb{N}$, and $i = 1,2$. To simplify the notation, we consider $h_1=h_2=h$ and $\Theta_1=\Theta_2=\Theta$, implying $N_1=N_2=N$, $k_1=k_2=k$ and thus $t_{k_1}=t_{k_2}=t_k$, for $k=0,\ldots, N$.

We use the Euler method  \cite{Li} to approximate the time integrals in \eqref{Volterra}, obtaining a system of integral equations for $\hat{f}^a_{(X_{i}, T_{j}) }(y , t)$, which denotes the approximation of $f^a_{(X_i,T_j)}(y,t)$ due to time discretization. For $x_1>B_1$ and $x_2>B_2$, we get 
\begin{subequations}\label{fcap}
{\footnotesize{\begin{align}
 \bar{F}_{\mathbf{X}}((x_1,B_2),t_k) &=h\sum_{\rho=0}^k\left[ \int_{-\infty}^{B_2} \bar{F}_{\mathbf{X}}((x_1,B_2),t_k\vert(B_1,y),t_\rho)\hat{f}^a_{(X_{2}, T_{1}) }\left( y , t_\rho\right) dy+\int_{-\infty}^{B_1} \bar{F}_{\mathbf{X}}((x_1,B_2),t_k\vert( y,B_2),t_\rho)\hat{f}^a_{(X_{1}, T_{2}) }\left(  y , t_\rho\right)dy\right];\label{Volterra1discret} \\
 \bar{F}_{\mathbf{X}}\left((B_1, x_2),t_k\right) &=h\sum_{\rho=0}^k \left[\int_{-\infty}^{B_2} \bar{F}_{\mathbf{X}}\left((B_1, x_2\right),t_k\vert(B_1,y),t_\rho)\hat{f}^a_{(X_{2},T_{1})}\left( y, t_\rho\right)dy +\int_{-\infty}^{B_1} \bar{F}_{\mathbf{X}}((B_1, x_2),t_k\vert(y,B_2),t_\rho)\hat{f}^a_{(X_{1}, T_{2}) }( y , t_\rho) dy\right].\label{Volterra2discret}
 \end{align}}}
\end{subequations}
Note that
\begin{eqnarray}\label{Flimit}
&&\bar{F}_{\mathbf{X}}((B_1, x_2),t_k\vert(B_1,y),t_k)= \mathbbm{1}_{\{y>x_2\}}; \qquad \bar{F}_{\mathbf{X}}((x_1,B_2),t_k\vert( y,B_2),t_k)=\mathbbm{1}_{\{y>x_1\}}; \nonumber\\
[-1.5ex]\\
&&\bar{F}_{\mathbf{X}}((B_1, x_2),t_k\vert(y,B_2),t_k)=0;\qquad\qquad \bar{F}_{\mathbf{X}}((x_1,B_2),t_k\vert(B_1,y),t_k)=0. \nonumber
\end{eqnarray}
Plugging \eqref{Flimit} into \eqref{fcap} and differentiating with respect to $x_j, j=1,2$, we get the system
\begin{subequations}\label{fcaplimit}
\footnotesize{\begin{align}
\frac{\partial  \bar{F}_{\mathbf{X}}((x_1,B_2),t_k)}{\partial x_1} &=-h \hat{f}^a_{(X_{1}, T_{2}) }\left( x_1 , t_k\right)+h\sum_{\rho=0}^{k-1} \int_{-\infty}^{B_2} \frac{\partial \bar{F}_{\mathbf{X}}((x_1,B_2),t_k\vert(B_1,y),t_\rho)}{\partial x_1}\hat{f}^a_{(X_{2}, T_{1}) }\left( y , t_\rho\right) dy\nonumber\\
&+h\sum_{\rho=0}^{k-1} \int_{-\infty}^{B_1} \frac{\partial \bar{F}_{\mathbf{X}}((x_1,B_2),t_k\vert( y,B_2),t_\rho)}{\partial x_1}\hat{f}^a_{(X_{1}, T_{2}) }\left(  y , t_\rho\right)dy; \label{Volterra1discret2} \\
\frac{\partial \bar{F}_{\mathbf{X}}\left((B_1, x_2),t_k\right)}{\partial x_2} &=-h \hat{f}^a_{(X_{2}, T_{1}) }\left(  x_2 , t_k\right)+ h\sum_{\rho=0}^{k-1} \int_{-\infty}^{B_2} \frac{\partial \bar{F}_{\mathbf{X}}\left((B_1, x_2\right),t_k\vert(B_1,y),t_\rho)}{\partial x_2}\hat{f}^a_{(X_{2},T_{1})}\left( y, t_\rho\right)dy \nonumber\\
 &+h\sum_{\rho=0}^{k-1} \int_{-\infty}^{B_1} \frac{\partial \bar{F}_{\mathbf{X}}((B_1, x_2),t_k\vert(y,B_2),t_\rho)}{\partial x_2}\hat{f}^a_{(X_{1}, T_{2}) }( y , t_\rho) dy \label{Volterra2discret2}.
 \end{align}}
\end{subequations}
For multivariate Gaussian processes, the derivatives $\partial \bar F_{\mathbf{X}}/\partial x_i$ are known while analytical or numerical computations can be performed for other processes. If we discretize  the spatial integral and we truncate the corresponding series with a finite sum, we obtain
\begin{subequations}\label{ftilda1}
\footnotesize{
\begin{align}
\frac{\partial\bar{F}_{\mathbf{X}}((x_1,B_2),t_k)}{\partial x_1} &=-h\tilde{f}^a_{(X_{1}, T_{2}) }\left(x_{1}, t_k\right)+hr_2\sum_{\rho=0}^{k-1} \sum_{u_2=0}^{m_2} \frac{\partial\bar{F}_{\mathbf{X}}((x_1,B_2),t_k\vert(B_1,y_{u_2}),t_\rho)}{\partial x_1}\tilde{f}^a_{(X_{2}, T_{1}) }\left(y_{u_2} , t_\rho\right)  \nonumber\\
 &+hr_1\sum_{\rho=0}^{k-1} \sum_{u_1=0}^{m_1} \frac{\partial\bar{F}_{\mathbf{X}}((x_1,B_2),t_k\vert( y_{u_1},B_2),t_\rho)}{\partial x_1}\tilde{f}^a_{(X_{1}, T_{2}) }\left(y_{u_1}, t_\rho\right);\label{ftilda1_espl}\\
 \frac{\partial\bar{F}_{\mathbf{X}}((B_1, x_2),t_k)}{\partial x_2} &= -h\tilde{f}^a_{(X_{2},T_{1})}\left( x_{2}, t_k\right)+hr_2\sum_{\rho=0}^{k-1} \sum_{u_2=0}^{m_2} \frac{\partial\bar{F}_{\mathbf{X}}((B_1, x_2),t_k\vert(B_1,y_{u_2}),t_\rho)}{\partial x_2}\tilde{f}^a_{(X_{2},T_{1})}\left( y_{u_2}, t_\rho\right) \nonumber\\
 &+hr_1\sum_{\rho=0}^{k-1} \sum_{u_1=0}^{m_1} \frac{\partial\bar{F}_{\mathbf{X}}((B_1, x_2),t_k\vert(y_{u_1},B_2),t_\rho)}{\partial x_2}\tilde{f}^a_{(X_{1}, T_{2}) }( y_{u_1} , t_\rho) \label{ftilda2_espl},
 \end{align}}
\end{subequations}
with $m_1,m_2\in\mathbb{N}$.  Here $\tilde{f}^a_{(X_{i}, T_{j}) }\left( y , t\right)$ denotes the approximation of $f^a_{(X_{i}, T_{j}) }\left(y , t\right)$ determined by the time and space discretization and by the truncation of the infinite sums of the space discretization.

Since $f^a_{(X_i,T_j)}(y_{u_i},t_0)=0$, we set $\tilde{f}^a_{(X_i,T_j)}(y_{u_i},t_0)=0$. Moving the term $\tilde{f}^a_{(X_{i}, T_{j})}(x_i,t_k)$ on the left hand side in \eqref{ftilda1}, we obtain the following algorithm to approximate $f^a_{(X_{i}, T_{j})}$ in the knots $\{(y_{u_1},y_{u_2});t_k\}$: \\
\noindent{\bf Step 1}
{\footnotesize{
\begin{eqnarray}
\tilde{f}^a_{(X_{1}, T_{2}) }\left(  y_{u_1} , t_1\right) 
=-\frac{1}{h }\left. \frac{\partial}{\partial x_1}\bar{F}_{\mathbf{X}}((x_1,B_2),t_1)\right\vert_{x_1=y_{u_1}};\nonumber\\
 \tilde{f}_{(X_{2}^{a},T_{1}) }\left( y_{u_2} , t_1\right)\nonumber
 =-\frac{1}{h } \left.\frac{\partial}{\partial x_2}\bar{F}_{\mathbf{X}}((B_1, x_2),t_1)\right\vert_{x_2=y_{u_2}}.
\end{eqnarray}}}
{\bf Step $k\geq 2$}
{\footnotesize{
\begin{eqnarray}
\tilde{f}^a_{(X_{1},T_{2})}\left(  y_{u_1} ,t_k\right) 
&=& - \frac{1}{h}\left. \frac{\partial \bar{F}_{\mathbf{X}}((x_1,B_2),t_k)}{\partial x_1}\right\vert_{x_1=y_{u_1}}+ r_2\sum_{\rho=0}^{k-1}\sum_{v_2=1}^{m_2} \tilde{f}^a_{(X_{2}, T_{1}) }\left( y_{v_2} , t_\rho\right)
  \left.\frac{\partial \bar{F}_{\mathbf{X}}((x_1,B_2),t_k\vert(B_1, y_{v_2}),t_\rho)}{\partial x_1}\right\vert_{x_1=y_{{u_1}}}\nonumber\\
 &+& r_1 \sum_{\rho=0}^{k-1}\sum_{v_1=1}^{m_1} \tilde{f}^a_{(X_{1},T_{2}) }\left(  y_{v_1}, t_\rho\right)  \left.\frac{\partial \bar{F}_{\mathbf{X}}((x_1,B_2),t_k\vert(y_{v_1},B_2) ,t_\rho)}{\partial x_1}\right\vert_{x_1=y_{u_1}};\nonumber\\
  \tilde{f}^a_{(X_{2},T_{1}) }\left( y_{u_2} , t_k\right)\nonumber
 &=& {-}\frac{1}{h}\left.\frac{\partial \bar{F}_{\mathbf{X}}((B_1, x_2),t_k)}{\partial x_2}\right\vert_{x_2=y_{u_2}} + r_2\sum_{\rho=0}^{k-1}\sum_{v_2=1}^{m_2}\tilde{f}^a_{(X_{2}, T_{1})}\left( y_{v_2}, t_\rho\right)
  \left.\frac{\partial \bar{F}_{\mathbf{X}}((B_1, x_2),t_k\vert(B_1, y_{v_2}),t_\rho)}{\partial x_2}\right\vert_{x_2=y_{u_2}}\nonumber\\
 &+& r_1 \sum_{\rho=0}^{k-1}\sum_{v_1=1}^{m_1} \tilde{f}^a_{(X_{1}, T_{2}) }\left(  y_{v_1},t_\rho\right)  \left.\frac{\partial \bar{F}_{\mathbf{X}}((B_1, x_2),t_k\vert(y_{v_1},B_2) ,t_\rho)}{\partial x_2}\right\vert_{x_2=y_{{u_2}}}.\nonumber
\end{eqnarray}}}
At time $t_1$,  $\hat{f}^a_{(X_{i}, T_{j}) }\left(  y_{u_i} , t_1\right)=\tilde{f}^a_{(X_{i}, T_{j}) }\left(  y_{u_i} , t_1\right)$ in each knot $y_{u_i}$.
\begin{remark}
We choose to use the Euler method because it simplifies the notation and it is easy to implement. More efficient schemes, e.g. trapezoidal formula or Gaussian quadrature formulas for infinite integration intervals, can be similarly applied improving the rate of convergence of the error of the algorithm.
\end{remark}
\begin{remark}\label{remark2}
The proposed algorithm can be used to approximate the unknown densities $f^a_{(X_i,T_j)}$ for all types of boundaries. To evaluate the joint density of $(T_1,T_2)$ using \eqref{ProbTTa}-\eqref{ProbTT3}, further steps are needed. First, discretize the integrals in \eqref{ProbTTa}-\eqref{ProbTT3} by means of trapezoidal or Gaussian quadrature formulas. Second, replace $f^a_{(X_i,T_j)}$ by $\widetilde{f}^a_{(X_i,T_j)}$. Finally, since $f_{T_i}$ and $f_{(X_i,T_j)}$ in \eqref{ProbTTa}-\eqref{ProbTT3} are typically unknown, approximate them using the numerical algorithms proposed in \cite{ReviewSac} and \cite{Benedetto}, respectively.
\end{remark}

\section{Convergence of the numerical method for $f^a_{(X_i,T_j)}$}\label{section5}
The error of the algorithm to approximate $f^a_{(X_i,T_j)}$, evaluated in the mesh points $(y_{u_i},t_k)$, is 
\begin{eqnarray}\label{error tot}
E^{(i)}(y_{u_i},t_k):=f^a_{(X_{i},T_{j}) }\left( y_{u_i} , t_k\right)-\tilde{f}^a_{(X_{i},T_{j}) }\left( y_{u_i} , t_k\right), 
\end{eqnarray}
for $k=0,\ldots,N,u_i=1,\ldots,m_i, i=1,2$. Mimicking the analysis of the error in \cite{CMV}, we rewrite  \eqref{error tot} as a sum of two errors.  The first is given by $e^{(i)}_k(y_{u_i}):=f^a_{(X_{i},T_{j}) }\left( y_{u_i} , t_k\right)-\hat{f}^a_{(X_{i},T_{j}) }\left( y_{u_i} , t_k\right)$ and it is due to the time discretization. The second is given by $E^{(i)}_{k,u_i}:  =\hat{f}^a_{(X_{i},T_{j}) }\left( y_{u_i} , t_k\right)-\tilde{f}^a_{(X_{i},T_{j}) }\left( y_{u_i} , t_k\right)$ and it is determined by the spatial discretization and by the truncation introduced at steps $k\geq 2$. 
\begin{lemma}\label{lemma1}
It holds
\begin{subequations}\label{errorE}
{\footnotesize{
\begin{align}
 E^{(1)}_{k,u_1}&=\sum_{\rho=0}^{k-1}\left[- \int_{-\infty}^{B_1}K_{1,k,\rho}((y_{u_1},B_2),(y,B_2)) 
 \hat{f}^a_{(X_{1}, T_{2})}( y , t_\rho) dy\right. - \int_{-\infty}^{B_2} K_{1,k,\rho}((y_{u_1},B_2),(B_1,y))
\hat{f}^a_{(X_{2},T_{1})}\left( y, t_\rho\right)dy \nonumber\\
& +r_1 \sum_{v_1=1}^{m_1} K_{1,k,\rho}((y_{u_1},B_2),(y_{v_1},B_2))\tilde{f}^a_{(X_{1}, T_{2}) }( y_{v_1} , t_\rho)\left.+r_2 \sum_{v_2=1}^{m_2} K_{1,k,\rho}((y_{u_1},B_2),(B_1,y_{v_2}))\tilde{f}^a_{(X_{2},T_{1})}\left( y_{v_2}, t_\rho\right) 
 \right];\label{errorEa}\\
E^{(2)}_{k,u_2}
&=\sum_{\rho=0}^{k-1}\left[ -
\int_{-\infty}^{B_1}K_{2,k,\rho}((B_1, y_{u_2}),(y,B_2)) 
 \hat{f}^a_{(X_{1}, T_{2}) }( y , t_\rho) dy\right.  - \int_{-\infty}^{B_2} K_{2,k,\rho}((B_1, y_{u_2}),(B_1,y))
\hat{f}^a_{(X_{2},T_{1})}\left( y, t_\rho\right)dy \nonumber\\
& +r_1 \sum_{v_1=1}^{m_1} K_{2,k,\rho}((B_1, y_{u_2}),(y_{v_1},B_2))\tilde{f}^a_{(X_{1}, T_{2}) }( y_{v_1} , t_\rho) \left.+r_2 \sum_{v_2=1}^{m_2} K_{2,k,\rho}((B_1, y_{u_2}),(B_1,y_{v_2}))\tilde{f}^a_{(X_{2},T_{1})}\left( y_{v_2}, t_\rho\right)
 \right],\label{errorEb}
\end{align}}}
\end{subequations}
where the kernels are
{\footnotesize{
\begin{eqnarray}
&&K_{1,k,t}((y_{u_1},b),(c,d))=\left. \frac{\partial}{\partial x_1}\left[\bar{F}_{\mathbf{X}}((x_1, b),t_k\vert(c,d),t)-\bar{F}_{\mathbf{X}}((x_1, b),t_{k-1}\vert(c,d),t)
\right]\right|_{x_1=y_{u_1}};\nonumber\\
[-1.5ex]\label{kernel}\\
&&K_{2,k,t}((a,y_{u_2}),(c,d))=\left. \frac{\partial}{\partial x_2}\left[\bar{F}_{\mathbf{X}}((a, x_2),t_k\vert(c,d),t)-\bar{F}_{\mathbf{X}}((a, x_2),t_{k-1}\vert(c,d),t)
\right]\right|_{x_2=y_{u_2}}.\nonumber
\end{eqnarray}}}
To simplify the notation we write $K_{i,k,\rho}$ instead of $K_{i,k,t_\rho}$ when $t=t_\rho$. Here $a,c\in(-\infty,B_1)$ and $b,d\in(-\infty,B_2)$.
\end{lemma}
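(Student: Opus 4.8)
The plan is to exploit the fact that, by \eqref{kernel}, $K_{i,k,\rho}$ is nothing but the $x_i$-derivative of the one-step increment $\bar F_{\mathbf X}(\cdot,t_k\vert\cdot,t_\rho)-\bar F_{\mathbf X}(\cdot,t_{k-1}\vert\cdot,t_\rho)$. Since $\hat f$ and $\tilde f$ are defined, respectively, by the time-discretized system \eqref{fcaplimit} and the fully discretized system \eqref{ftilda1}, both written at a single level $t_k$ with the single-level kernels $\partial_{x_i}\bar F_{\mathbf X}(\cdot,t_k\vert\cdot,t_\rho)$, the natural way to make $K_{i,k,\rho}$ appear is to write each system at the two consecutive levels $t_k$ and $t_{k-1}$ and subtract. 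I treat $i=1$; the case $i=2$, giving \eqref{errorEb}, is the same with the two boundaries interchanged.

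Subtracting \eqref{Volterra1discret2} at $t_{k-1}$ from the same equation at $t_k$ does two things. The explicit terms combine into $h[\hat f_{(X_1^a,T_2)}(y_{u_1},t_k)-\hat f_{(X_1^a,T_2)}(y_{u_1},t_{k-1})]$, while the two kernel sums recombine through the identity $\sum_{\rho=0}^{k-1}\partial_{x_1}\bar F_{\mathbf X}(\cdot,t_k\vert\cdot,t_\rho)-\sum_{\rho=0}^{k-2}\partial_{x_1}\bar F_{\mathbf X}(\cdot,t_{k-1}\vert\cdot,t_\rho)=\sum_{\rho=0}^{k-1}K_{1,k,\rho}+\partial_{x_1}\bar F_{\mathbf X}(\cdot,t_{k-1}\vert\cdot,t_{k-1})$. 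The surviving level-$(k-1)$ diagonal is then evaluated by \eqref{Flimit}: there $\bar F_{\mathbf X}$ degenerates to an indicator whose $x_1$-derivative is a Dirac mass, and after integration it reproduces exactly $h\,\hat f_{(X_1^a,T_2)}(y_{u_1},t_{k-1})$, cancelling the spurious $-h\,\hat f_{(X_1^a,T_2)}(y_{u_1},t_{k-1})$ coming from the explicit difference. What remains is a clean relation carrying only the explicit term $h\,\hat f_{(X_1^a,T_2)}(y_{u_1},t_k)$, the kernels $K_{1,k,\rho}$ against the spatial integrals of $\hat f$, and the data term $\partial_{x_1}[\bar F_{\mathbf X}((x_1,B_2),t_k)-\bar F_{\mathbf X}((x_1,B_2),t_{k-1})]|_{x_1=y_{u_1}}$.

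Running the identical differencing on \eqref{ftilda1_espl}, with the Euler quadratures $r_i\sum_{v_i}$ in place of the integrals, yields the twin relation for $\tilde f$; crucially it carries the \emph{same} data term, because \eqref{ftilda1} and \eqref{fcaplimit} share the left-hand side $\partial_{x_1}\bar F_{\mathbf X}((x_1,B_2),t_k)$. Subtracting the $\hat f$-relation from the $\tilde f$-relation therefore annihilates this common data term, the two explicit $t_k$ terms combine into $h\,E^{(1)}_{k,u_1}$, and dividing by $h$ leaves precisely the four-term sum of \eqref{errorEa}: the spatial integrals weighted by $\hat f$ (inherited from the $\hat f$-equation) and the quadrature sums weighted by $\tilde f$ (inherited from the $\tilde f$-equation), both against $K_{1,k,\rho}$. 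The same computation with $x_2$ and $(B_1,x_2)$ gives \eqref{errorEb}.

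The step I expect to demand the most care is the cancellation of the $t_{k-1}$ densities, i.e. checking that the leftover diagonal $\partial_{x_1}\bar F_{\mathbf X}(\cdot,t_{k-1}\vert\cdot,t_{k-1})$ reproduces the density exactly, with the correct sign and coefficient. This forces one to order the operations correctly—differentiating in $x_1$ and integrating the Dirac mass against a genuine density \emph{before} the integral is replaced by its grid quadrature, where a sampled delta would be meaningless—and to track the asymmetric summation limits already visible in \eqref{fcaplimit} (the $k-1$ against $k-2$ upper indices), which dictate from which sum the surviving diagonal originates. Once this bookkeeping is carried out consistently for $i=1$ and $i=2$, the representations \eqref{errorEa}–\eqref{errorEb} follow.
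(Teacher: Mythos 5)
Your proposal is correct and is essentially the paper's own argument: both rest on the same double difference---between the $\hat f$-system \eqref{fcaplimit} and the $\tilde f$-system \eqref{ftilda1}, and between the time levels $t_k$ and $t_{k-1}$---with the diagonal $\rho=k-1$ terms handled via \eqref{Flimit} and the common data term $\partial_{x_i}\bar F_{\mathbf X}$ cancelling. The only (immaterial) difference is the order of the two subtractions: the paper subtracts the two systems first (its Eq.~\eqref{2}) and then removes the level-$(k-1)$ contribution via the identity \eqref{3}, whereas you time-difference each system first and then subtract.
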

The proof of Lemma \ref{lemma1} is given in Appendix C. To prove the convergence of the proposed algorithm, we need the following \newline

\noindent\textbf{Regularity conditions} \emph{
For  $i,j=1,2, i\neq j, k=1,\ldots, N, \rho=0,\ldots, k-1, u_i=1,\ldots, m_i$: 
\begin{enumerate}
	\item[(i)] $K_{i,k,\rho}((a,b),(c,y))\hat{f}^a_{(X_{j},T_{i})}\left( y, t_\rho\right)$ and $K_{i,k,\rho}((a,b),(y,d))
 \hat{f}^a_{(X_{i}, T_{j}) }( y , t_\rho)$ are ultimately monotonic (Page 208 in \cite{Davis}) in $y$;
	\item[(ii)] $f^a_{(X_{i},T_{j})}\left( y, t_\rho\right)$  is bounded, it belongs to $L^1$ in $y$ and there exist positive functions $C_{i,1}(y)\in L^1$, $C_{i,2}(y)\in L^1$, with $y\in(-\infty,B_1)$ and $y\in(-\infty,B_2)$, respectively, such that 
	\begin{eqnarray*}
\left|K_{i,k,\rho}((a,b),(y,d))\right|&\leq h C_{i,1}(y); \qquad\qquad \left|K_{i,k,\rho}((a,b),(c,y))\right|&\leq h C_{i,2}(y),
\end{eqnarray*}
and $C_{i,1}(0)$ and $C_{i,2}(0)$ are bounded;
	\item[(iii)]  for  $l=1,2$ 
	{\footnotesize{\begin{eqnarray*}
	&\int_{-\infty}^{B_i-m_i(r_i) r_i}C_{l,i}(y)\left|\hat{f}^a_{(X_{i},T_{j})}\left( y, t_\rho\right)\right|dy\leq \psi_{l,i} r_i,
		\end{eqnarray*}}}
as $r_i\to 0$ and $m_i(r_i)r_i\to \infty$, where $\psi_{l,i}$ are positive constants;
\item[(iv)] for $l=1,2$, there exist constants $Q_{l,i}$ such that
{\footnotesize{\begin{equation*}
	 \left|\int_{-\infty}^{B_i}\frac{\partial}{\partial t}\left[C_{l,i}(y) f^a_{(X_{i},T_{j})}\left( y, t\right)\right]dy\right|\leq Q_{l,i};
\end{equation*}}}
\item[(v)] for $l=1,2$, $\mathbf{z}_1=(y_{u_1},B_2)$ and $\mathbf{z}_2=(B_1,y_{u_2})$,
{\footnotesize{
\begin{eqnarray*}
&&\frac{\partial}{\partial t }\left[\bar{F}_{\mathbf{X}}(\mathbf{z}_l,t_\rho|(B_1,y),t)f^a_{(X_2,T_{1})}(y,t)\right]\vert_{t=\tau}\in L^1 \textrm{ in  }  y\in (-\infty,B_2);\\
&&\frac{\partial}{\partial t}\left[ \bar{F}_{\mathbf{X}}(\mathbf{z}_l,t_\rho|(y,B_2),t)f^a_{(X_1,T_{2})}(y,t)\right]\vert_{t=\tau}\in L^1 \textrm{ in } y\in (-\infty,B_1);\\
&&\frac{\partial}{\partial y_{u_l}}\frac{\partial}{\partial t }\left[\bar{F}_{\mathbf{X}}(\mathbf{z_l},t_\rho|(B_1,y),t)f^a_{(X_2,T_{1})}(y,t)\right]\vert_{t=\tau}\in L^1 \textrm{ in  } y\in (-\infty,B_2);\\
&&\frac{\partial}{\partial y_{u_l}}\frac{\partial}{\partial t}\left[ \bar{F}_{\mathbf{X}}(\mathbf{z}_l,t_\rho|(y,B_2),t)f^a_{(X_1,T_{2})}(y,t)\right]\vert_{t=\tau}\in L^1 \textrm{ in } y\in (-\infty,B_1).
\end{eqnarray*}}}
\end{enumerate}}
\noindent The following theorem gives the convergence of the proposed algorithm.
\begin {theorem} \label{error}
Denote $r=\max(r_1,r_2)$. If regularity conditions (i)--(v) are satisfied, then 
\begin{eqnarray*}
&&\max\left\{|E^{(i)}(y_{u_i},t_k)|, y_{u_i}=B_i-u_i r_i, t_k=h k, u_i\in\mathbb{N}, k=0,\ldots, N\right\} = O(h)+O(r).
\end{eqnarray*}
\end{theorem}
The proof of Theorem \ref{error} is given in Appendix D.
\begin{remark}
The numerical approximation $\hat f^a_{(X_j,T_i)}(y,t_\rho)$ can be rewritten as a function of $f^a_{(X_j,T_i)}(y,t_\rho)$ and $K_{i,k,\rho}((a,b),(c,d))$, as shown in Remark 2 in \cite{CMV}. Therefore, assumptions (i) and (iii) are in fact assumptions on $f^a_{(X_j,T_i)}$ and $K_{i,k,\rho}((a,b),(c,d))$.
\end{remark}
\begin{remark}{
Theorem 3 holds for any system of integral equations satisfying regularity conditions (i)-(v). We explicitly list them to simplify the proof of the theorem but some of them are always fulfilled by bivariate diffusion processes. In particular we easily note that:
\begin{itemize}
\item[(ii)] 
\begin{itemize}
\item The function $f_{(X_{i}^a,T_{j})}\left( y, t\right)$  is the pdf of a diffusion process and thus it is bounded and it belongs to $L^1$ in both $y$ and $t$.
\item The function $K_{i,k,\rho}((a,b),(y,d))$ is the difference between two bivariate functions. Each
 function is a probability distribution with respect to one variable and a probability density with respect to the other. Furthermore the functions in the difference are the same but computed at different times. Since the process is a diffusion, this difference is bounded. 
\end{itemize}
\item[(iii)] {This condition is not restrictive: since $\lim_{m_i(r_i)r_i\to\infty} \int_{-\infty}^{B_i-m_i(r_i)r_i} C_{l,i}(y)\left|\hat{f}^a_{(X_ i,T_{j})}\left( y, t_\rho\right)\right|dy=0$ $  \forall l, i, t_\rho, y$, it is always possible to choose $m_i(r_i)r_i$ large enough to have condition (iii).}
\item[(iv)] $C_{l,i}(y)$ does not depend upon $t$ and thus
\begin{eqnarray*}
 \left|\int_{-\infty}^{B_i}\frac{\partial}{\partial t}\left[C_{l,i}(y) f^a_{(X_i,T_{j})}\left( y, t_\rho\right)\right]dy\right|=  \left|\int_{-\infty}^{B_i}C_{l,i}(y) \frac{\partial}{\partial t}\left[f^a_{(X_{i},
T_{j})}\left( y, t_\rho\right)\right]dy\right|\leq {\sup_y |\frac{\partial}{\partial t} f^a_{(X_i,T_j)}| \int_{-\infty}^{B_i}\left|C_{l,i}(y) \right|dy \leq Q_{l,i}},
\end{eqnarray*}
since $\frac{\partial}{\partial t} f^a_{(X_i,T_j)}$ is bounded being the derivative of a density of a diffusion process.
\item[(v)] If we differentiate \eqref{Volterra} with respect to $t$ or with respect to both $t$ and $x_i$, than the left hand side of \eqref{Volterra1} would be finite, since the function  $\bar F_{\mathbf{X}}(\mathbf{x},t)$ is a bivariate probability distribution of a diffusion process satisfying Kolmogorov equation. Then the corresponding right hand side should also be finite and thus assumption (v) is fulfilled.
\end{itemize}
Assumption (i) does not have an immediate interpretation.However it is verified by many important diffusion processes such as the Wiener and the Ornstein Uhlenbeck processes. Moreover, it {holds for stationary processes for large values of $t$ since their distribution does not depend upon the initial condition.}.}
\end{remark}

\section{{Special case:} bivariate Wiener process}\label{section6}
Consider a bivariate process $\mathbf{X}$ solving  \eqref{diff} with $t_0=0$, constant drift 
$\bm \mu(\mathbf{X}(t))=(\mu_1,\mu_2)\in \mathbb{R}^2$ and positive-definite diffusion matrix
\[
\bm \Sigma \left(\bm X(t)\right)=\left( 
\begin{array}{cc}
\sigma _{1} & 0\\ 
\rho\sigma_2 & \sigma _{2}\sqrt{1-\rho^2}%
\end{array}%
\right),
\]
for $\sigma_1,\sigma_2>0, \rho \in (-1,1)$. Then $\bm X$ is a bivariate Wiener process with drift $(\mu_1,\mu_2)$ and covariance matrix
\[
\widetilde{\bm\Sigma}=\left( 
\begin{array}{cc}
\sigma _{1}^2 & \rho\sigma_1\sigma_2\\ 
\rho\sigma_1\sigma_2 & \sigma _{2}^2
\end{array}%
\right),
\]
{and the densities $f_{\mathbf{X}}, f^a_{X_i}$ and $f_{T_i}$ are available \cite{coxMiller} in closed form. The unknown density $f^a_{\mathbf{X}}$ is a solution of the two-dimensional Kolmogorov forward equation}
\begin{equation*}
\label{FokkerPlanck} \frac{\partial f^a_{\mathbf{X}}(\mathbf{x},t) }{\partial t}=
\frac{\sigma _{1}^{2}}{2}\frac{\partial ^{2}f^a_{\mathbf{X}}( \mathbf{%
x},t) }{\partial x_{1}^{2}}+\frac{\sigma _{2}^{2}}{2}\frac{\partial
^{2}f^a_{\mathbf{X}}( \mathbf{x},t)}{\partial x_{2}^{2}}+\sigma _{1}\sigma _{2}\rho \frac{\partial ^{2}f^a_{\mathbf{X}}( \mathbf{x},t)}{\partial x_{1}\partial x_{2}}-\mu _{1}\frac{\partial f^a_{\mathbf{X}}( \mathbf{x},t) }{%
\partial x_{1}}-\mu _{2}\frac{\partial f^a_{\mathbf{X}}( \mathbf{x}%
,t) }{\partial x_{2}}, 
\end{equation*}%
with initial, boundary and absorbing conditions given by
\begin{eqnarray}
\lim_{t\rightarrow 0}f^a_{\mathbf{X}}\left( \mathbf{x},t\right) &=&\delta
\left( x_1-x_{01}\right) \delta \left( x_2-x_{02}\right);  \label{initial} \\
\lim_{x_{1}\rightarrow -\infty }f^a_{\mathbf{X}}\left( \mathbf{x},t\right)
&=&\lim_{x_{2}\rightarrow -\infty }f^a_{\mathbf{X}}\left( \mathbf{x}%
,t\right) =0; \label{bound} \\
\left. f^a_{\mathbf{X}}\left( \mathbf{x},t\right) \right\vert
_{x_{1}=B_{1}} &=&\left. f^a_{\mathbf{X}}\left( \mathbf{x},t\right)
\right\vert _{x_{2}=B_{2}}=0, \label{abs}
\end{eqnarray}
respectively. The solution provided in \cite{Domine} does not fulfil  \eqref{initial} when $(\mu_1,\mu_2)\neq (0,0)$. Mimicking their proof, we noted that the normalizing factor
{\footnotesize{\begin{equation}\label{constant}
\exp \left(-\frac{\left( \mu _{2}\rho \sigma _{1}\sigma _{2}-\mu _{1}\sigma^2
_{2}\right) B_{1}+\left( \mu _{1}\rho \sigma _{1}\sigma _{2}-\mu _{2}\sigma^2
_{1}\right) B_{2}}{\left( 1-\rho ^{2}\right) \sigma _{1}^{2}\sigma _{2}^{2}}\right)
\end{equation}}}
\noindent is missing. Since it equals 0 when $(\mu_1,\mu_2)=(0,0)$, the results in \cite{Domine} are correct for the driftless case and correspond to those provided in \cite{Metzler}. Summarizing these observations, in presence of drift we have
\begin{lemma}\label{lemmafaj} 
The joint pdf of $\mathbf{X}$ up to time $T$ is 
{\footnotesize{\begin{equation} \label{finalfa} 
f^a_{\mathbf{X}}(\mathbf{x},t)=\frac{2}{\alpha K_{3}t}\exp\left(
K_{1}(B_{1}-x_{01})+K_{2}(B_{2}-x_{02}) -\frac{\sigma _{1}^{2}\mu _{2}^{2}-2\mu _{1}\mu
_{2}\sigma _{1}\sigma _{2}\rho +\sigma _{2}^{2}\mu _{1}^{2}}{2K_{3}^{2}}t-%
\frac{\bar{r}^{2}+\bar{r}_{0}^{2}}{2K_{3}^{2}t}\right) H(\bar{r},\bar{r}%
_{0},\phi ,\phi _{0},t),  
\end{equation}}}
where $\bar r:= \bar r(\bm x) \in (0,\infty), \phi:=\phi(\bm x) \in (0,\alpha)$ and
{\footnotesize{\begin{eqnarray*}
\bar{r}&=&\sqrt{\sigma _{1}^{2}(B_{2}-x_{2})^{2}+\sigma
_{2}^{2}(B_{1}-x_{1})^{2}-2\sigma _{1}\sigma
_{2}\rho(B_{1}-x_{1})(B_{2}-x_{2})}; \qquad\qquad \bar{r}_0=\bar{r}_{\vert_{\bm x=\bm x_0}};\\
\bar{r}\cos(\phi)&=&\sigma_2(B_1-x_1)-\sigma_1\rho (B_2-x_2),\quad 
\bar{r}\sin(\phi)=\sigma_1\sqrt{1-\rho^2}(B_2-x_2); \qquad\qquad \phi_0=\phi_{\vert_{\bm x=\bm x_0}};\\
K_{1} &=&\frac{\sigma _{2}\mu _{1}-\sigma _{1}\mu _{2}\rho }{\sigma
_{1}^{2}\sigma _{2}(1-\rho ^{2})},\qquad K_{2}=\frac{\sigma _{1}\mu
_{2}-\sigma _{2}\mu _{1}\rho }{\sigma _{1}\sigma _{2}^{2}(1-\rho ^{2})},%
\qquad K_{3}=\sigma _{1}\sigma _{2}\sqrt{1-\rho ^{2}} ;\\
\alpha &=&\arctan \left( -\frac{\sqrt{1-\rho ^{2}}}{\rho }\right) \in (0,\pi); \\
H(\bar{r},\bar{r}_{0},\phi ,\phi _{0},t) &=&\sum_{n=1}^{\infty }\sin \left(\frac{%
n\pi \phi _{0}}{\alpha }\right)\sin \left(\frac{n\pi \phi }{\alpha }\right)I_{n\pi /\alpha
}\left( \frac{\bar{r}\bar{r}_{0}}{K_{3}^{2}t}\right).
\end{eqnarray*}}}
\end{lemma}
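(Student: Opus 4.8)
The plan is to recognize \eqref{finalfa} as the absorbing (Dirichlet) heat kernel of a planar wedge, obtained from the Kolmogorov forward equation by first removing the drift and then straightening both the diffusion and the geometry with a single linear map. I would strip the drift with a gauge substitution $f_{\mathbf{X}^a}(\mathbf{x},t)=e^{\langle\mathbf{a},\mathbf{x}\rangle+bt}g(\mathbf{x},t)$. Inserting this into the forward equation, the second order operator produces the extra terms $\langle\widetilde{\bm\Sigma}\mathbf{a},\nabla g\rangle$ and $\tfrac12\langle\mathbf{a},\widetilde{\bm\Sigma}\mathbf{a}\rangle g$, so the choice $\mathbf{a}=\widetilde{\bm\Sigma}^{-1}\bm\mu$ annihilates the first order (drift) terms while $b=-\tfrac12\langle\bm\mu,\widetilde{\bm\Sigma}^{-1}\bm\mu\rangle$ cancels the residual zeroth order term; then $g$ solves the driftless forward equation with the absorbing conditions \eqref{abs} and the decay \eqref{bound} unchanged and with a point source initial condition inherited from \eqref{initial}. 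Evaluating $\mathbf{a}$ and $b$ gives $\mathbf{a}=(K_1,K_2)$ and $b=-(\sigma_1^2\mu_2^2-2\mu_1\mu_2\sigma_1\sigma_2\rho+\sigma_2^2\mu_1^2)/(2K_3^2)$, so that, after collecting this exponential with the normalization enforced by \eqref{initial}, the drift dependent exponentials and the normalizing constant \eqref{constant} of \eqref{finalfa} appear.

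Next I would straighten the geometry with the linear map $\xi=\sigma_2(B_1-x_1)-\sigma_1\rho(B_2-x_2)$, $\eta=\sigma_1\sqrt{1-\rho^2}(B_2-x_2)$, whose matrix $M$ satisfies $|\det M|=K_3$ and, crucially, $M\widetilde{\bm\Sigma}M^{\top}=K_3^2\mathbb{I}$; hence in the $(\xi,\eta)$ plane $g$ obeys the isotropic heat equation $\partial_t g=\tfrac12 K_3^2\Delta_{\xi,\eta}g$, and the Jacobian $K_3$ is exactly what turns a $1/K_3^2$ into the $1/K_3$ of \eqref{finalfa}. Passing to polar coordinates $(\bar r,\phi)$ one checks that the absorbing line $\{x_2=B_2\}$ becomes the ray $\phi=0$ and $\{x_1=B_1\}$ the ray with $\tan\phi=-\sqrt{1-\rho^2}/\rho$, that is $\phi=\alpha$; the admissible strip is thus mapped onto the wedge $\{0<\phi<\alpha\}$, and $(\bar r,\phi)$, $(\bar r_0,\phi_0)$ are precisely the polar data of $\mathbf{x}$ and $\mathbf{x}_0$ recorded in the statement.

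It then remains to write the Dirichlet heat kernel of this wedge by separation of variables: the angular problem on $[0,\alpha]$ with vanishing endpoints has eigenfunctions $\sin(n\pi\phi/\alpha)$, and the radial kernel of order $\nu=n\pi/\alpha$ is $(K_3^2t)^{-1}\exp(-(\bar r^2+\bar r_0^2)/(2K_3^2t))\,I_{\nu}(\bar r\bar r_0/(K_3^2t))$. Summing these against the completeness relation $\delta(\phi-\phi_0)=\tfrac2\alpha\sum_{n}\sin(n\pi\phi/\alpha)\sin(n\pi\phi_0/\alpha)$ assembles $H$ and the Gaussian prefactor, and undoing the two substitutions delivers \eqref{finalfa}. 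I expect the crux to be making this last step rigorous: justifying the termwise construction and the interchange of sum and limit, and, above all, recovering the point source \eqref{initial}, which needs the small time asymptotics of $I_\nu$ and Weber's second exponential (Hankel) integral to show that the radial kernels converge to $\delta(\bar r-\bar r_0)/\bar r$, while simultaneously checking that the angular series is the genuine Dirichlet kernel of the wedge, so that no image terms are omitted and the constants collapse to the $2/(\alpha K_3 t)$ of \eqref{finalfa}.
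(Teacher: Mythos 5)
Your proposal is correct and follows essentially the paper's own route: the paper gives no independent derivation but states that it mimics the proof in \cite{Domine}, and that proof is precisely your argument --- a gauge transformation $e^{\langle \widetilde{\bm\Sigma}^{-1}\bm\mu,\,\mathbf{x}\rangle+bt}$ removing the drift, a linear map sending the admissible quadrant onto the wedge $0<\phi<\alpha$ where the diffusion becomes isotropic with Jacobian $K_3$, and the Bessel-series Dirichlet kernel of the wedge, with the missing normalizing factor \eqref{constant} restored. One remark: carried to the end, your gauge argument produces the spatial factor $e^{K_{1}(x_{1}-x_{01})+K_{2}(x_{2}-x_{02})}$ rather than the printed $e^{K_{1}(B_{1}-x_{01})+K_{2}(B_{2}-x_{02})}$; the former is the version that actually satisfies \eqref{initial} and the forward equation and is the one consistent with Corollary \ref{corx}, so your derivation in fact corrects what appears to be a misprint in \eqref{finalfa} rather than reproducing it verbatim.
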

{Here $I_\rho(x)$ denotes the modified Bessel function of the first kind \cite{Watson} and }$\bar r$ and $\phi$ are functions of $\bm x$ obtained through a suitable change of variables \cite{Domine}. We use them instead of $\bm x$ to simplify the notation. \newline
From the definition of conditional density $f^a_{X_2|X_1}$, \eqref{finalfa} and $f^a_{X_1}$ given in \cite{ReviewSac}, it follows
\begin{corollary}\label{corx}
\label{corfamarg} The conditional density $
f^a_{X_{i}\vert X_{j}}(x_{i},t\vert x_{j},t)$, for $i,j=1,2;$ $i\neq j$ is  
{\footnotesize{\begin{equation}\label{facondXWiener}
f^a_{X_{i}\vert X_{j}}(x_{i},t\vert x_{j},t)=\frac{\frac{2\sigma _{j}\sqrt{2\pi t}}{%
\alpha K_{3}t}\exp \left( -K_{i}\left[ \frac{\sigma _{i}}{\sigma _{j}}%
(x_{j}-x_{0j})\rho -(x_{i}-x_{0i})+ N_j t\right]-\frac{\bar{r}^2+\bar{r}_0^2-(x_{j}-x_{0j})^{2}\sigma _{i}^{2}(1-\rho ^{2})}{2K_{3}^{2}t}
\right) }{\left[ 1-\exp \left( \frac{2(B_{j}-x_{0j})(x_{j}-B_{j})}{\sigma
_{j}^{2}t}\right) \right]}H(\bar{r},\bar{r}_{0},\phi ,\phi _{0},t),
\end{equation}}}
with
{\footnotesize{\[
N_{1}=\frac{\sigma _{1}\mu _{2}-\sigma _{2}\mu _{1}\rho }{2\sigma _{1}},
\qquad N_{2}=\frac{\sigma _{2}\mu _{1}-\sigma _{1}\mu _{2}\rho }{2\sigma _{2}}.\]}}
\end{corollary}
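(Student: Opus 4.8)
The plan is to obtain $f_{X_i^a\vert X_j^a}$ directly from its definition as a quotient of densities. The conditioning event $\{X_j^a(t)=x_j\}$ constrains only the $j$-th coordinate — it encodes $X_j(t)=x_j$ together with $T_j>t$ and imposes nothing on $X_i$ — so its density is exactly the univariate absorbed transition density $f_{X_j^a}(x_j,t)$ of \cite{ReviewSac}, while the joint event $\{X_i^a(t)<x_i,\ X_j^a(t)=x_j\}$ is governed by $f_{\mathbf X^a}$. Differentiating the defining conditional probability in $x_i$ therefore gives
\[
f_{X_i^a\vert X_j^a}(x_i,t\vert x_j,t)=\frac{f_{\mathbf X^a}(\mathbf x,t)}{f_{X_j^a}(x_j,t)}.
\]
Its $x_i$-mass is $\mathbb P(T_i>t\mid X_j^a(t)=x_j)\le1$, so this is a defective density, as one expects under absorption. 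I would then insert \eqref{finalfa} for the numerator and, for the denominator, the classical method-of-images density of a drifted Wiener process absorbed at $B_j$, which after combining the direct and mirror Gaussians factorises as
\[
f_{X_j^a}(x_j,t)=\frac{1}{\sqrt{2\pi\sigma_j^2t}}\,e^{-\frac{(x_j-x_{0j}-\mu_jt)^2}{2\sigma_j^2t}}\Big[1-e^{\frac{2(B_j-x_{0j})(x_j-B_j)}{\sigma_j^2t}}\Big].
\]

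Most of the simplification is bookkeeping carried out factor by factor. The constant prefactors combine as $\tfrac{2}{\alpha K_3t}\sqrt{2\pi\sigma_j^2t}=\tfrac{2\sigma_j\sqrt{2\pi t}}{\alpha K_3t}$; the image bracket passes to the denominator as $[1-e^{(\cdot)}]^{-1}$; and the wedge factor $H(\bar r,\bar r_0,\phi,\phi_0,t)$, with $\bar r$ and $\phi$, carries through unchanged, since $f_{X_j^a}$ has none of this angular structure. All the genuine content sits in the exponent.

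The heart of the proof, and the step I expect to be the main obstacle, is the consolidation of the exponentials. Expanding the reciprocal of the denominator Gaussian as $+\tfrac{(x_j-x_{0j})^2}{2\sigma_j^2t}-\tfrac{\mu_j(x_j-x_{0j})}{\sigma_j^2}+\tfrac{\mu_j^2t}{2\sigma_j^2}$ and using $K_3^2=\sigma_1^2\sigma_2^2(1-\rho^2)$, hence $1/\sigma_j^2=\sigma_i^2(1-\rho^2)/K_3^2$, the $1/t$-quadratic piece merges with $-(\bar r^2+\bar r_0^2)/(2K_3^2t)$ to produce exactly the second exponential $-[\bar r^2+\bar r_0^2-(x_j-x_{0j})^2\sigma_i^2(1-\rho^2)]/(2K_3^2t)$. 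It then remains to check that the linear-in-$\mathbf x$ drift (Girsanov) factor carried by $f_{\mathbf X^a}$ — namely $K_1(x_1-x_{01})+K_2(x_2-x_{02})$, since $\bm\mu^{\top}\widetilde{\bm\Sigma}^{-1}=(K_1,K_2)$ — together with the leftover cross term $-\mu_j(x_j-x_{0j})/\sigma_j^2$ and the two $t$-linear contributions, collapses into $-K_i\big[\tfrac{\sigma_i}{\sigma_j}(x_j-x_{0j})\rho-(x_i-x_{0i})+N_jt\big]$. Matching coefficients reduces everything to three identities in $K_1,K_2,K_3,N_1,N_2$: the $x_i$-coefficient is immediate (the denominator is free of $x_i$, leaving $K_i(x_i-x_{0i})$); the $x_j$-coefficient requires $K_j-\mu_j/\sigma_j^2=-K_i\rho\,\sigma_i/\sigma_j$; and the $t$-coefficient fixes $N_j$ through $K_iN_j=\tfrac{\sigma_1^2\mu_2^2-2\mu_1\mu_2\sigma_1\sigma_2\rho+\sigma_2^2\mu_1^2}{2K_3^2}-\tfrac{\mu_j^2}{2\sigma_j^2}=\tfrac{(\sigma_j\mu_i-\sigma_i\mu_j\rho)^2}{2K_3^2}$. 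Verifying these three algebraic identities from the explicit forms of $K_1,K_2,K_3,N_1,N_2$ — in essence completing the square in the correlated Gaussian and reconciling the drift terms — is the only nonroutine part; the rest is substitution.
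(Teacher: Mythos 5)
Your proof is correct and is essentially the paper's own (one-line) argument: the Corollary is obtained there precisely as the quotient of $f_{\mathbf X^a}$ from \eqref{finalfa} by the method-of-images density $f_{X_j^a}$, and your exponent bookkeeping — in particular the identities $K_j-\mu_j/\sigma_j^2=-K_i\rho\,\sigma_i/\sigma_j$ and $K_iN_j=(\sigma_j\mu_i-\sigma_i\mu_j\rho)^2/(2K_3^2)$ — checks out. The only point worth flagging is that you take the Girsanov factor in \eqref{finalfa} to be $e^{K_1(x_1-x_{01})+K_2(x_2-x_{02})}$ whereas the paper prints $e^{K_1(B_1-x_{01})+K_2(B_2-x_{02})}$; your reading is the one consistent with the stated Corollary (and with \eqref{facondXTWiener}), so it is clearly the intended form.
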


\begin{lemma}\label{corfa} 
The conditional density $f^a_{X_{i}\vert T_{j}}(x_i\vert t)$ is   
{\footnotesize{
\begin{eqnarray}
f^a_{X_{i}\vert T_{j}}(x_i\vert t)&=&\frac{\sigma _{j}\pi \sqrt{2\pi t}}{\alpha
^{2}(B_{i}-x_i)(B_{j}-x_{0j})}\exp \left( -K_{i}\left[ \frac{\sigma _{i}}{%
\sigma _{j}}(B_{j}-x_{0j})\rho -(x_i-x_{0i}) + N_j t\right] \right) \nonumber\\[-1.5ex] \label{facondXTWiener}\\[-1.5ex]
&\times &\exp \left(-\frac{[\rho \sigma
_{i}(B_{j}-x_{0j})-\sigma _{j}(B_{i}-x_{0i})]^{2}+\sigma
_{j}^{2}(B_{i}-x_i)^{2}}{2K_{3}^{2}t}\right) G_{ij}(\bar{r}_{0},\phi
_{0},x_i,t),\nonumber
\end{eqnarray}}}
where 
{\footnotesize{\[
G_{ij}(\bar{r}_{0},\phi _{0},x_{i},t)=\sum_{n=1}^{\infty }\delta _{i}n\sin \left( 
\frac{n\pi \phi _{0}}{\alpha }\right) I_{\frac{n\pi }{\alpha }}\left( \frac{%
\sigma _{j}(B_{i}-x_{i})\bar{r}_{0}}{K_{3}^{2}t}\right),
\]}}
with $\delta_{1}=1$ and $\delta _{2}=(-1)^{n+1}$ for $i,j=1,2, i\neq j$.
\end{lemma}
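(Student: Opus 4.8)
The plan is to obtain $f_{X_i^a|T_j}$ as the ratio of the joint density $f_{(X_i^a,T_j)}$ to the marginal first passage density $f_{T_j}$,
\begin{equation*}
f_{X_i^a|T_j}(x_i\mid t)=\frac{f_{(X_i^a,T_j)}(x_i,t)}{f_{T_j}(t)},
\end{equation*}
and to evaluate both ingredients explicitly. The denominator $f_{T_j}$ is the univariate inverse Gaussian first passage density of a Wiener process with drift $\mu_j$ and variance $\sigma_j^2$ started at $x_{0j}$, which is known in closed form and carries the factor $(B_j-x_{0j})$ in its numerator together with the power $t^{-3/2}$. A quick bookkeeping of the two factors already fixes the identification: dividing the $\propto t^{-1}/(B_i-x_i)$ behaviour of the numerator by $f_{T_j}\propto (B_j-x_{0j})\,t^{-3/2}$ reproduces exactly the prefactor $\sqrt{2\pi t}/[(B_i-x_i)(B_j-x_{0j})]$ of \eqref{facondXTWiener}, confirming that the correct normalisation is the \emph{univariate} $f_{T_j}$ and that the resulting conditional is defective on the event that component $i$ has not yet crossed.

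To express the numerator I would write the Kolmogorov forward equation of Section \ref{Section6} in divergence form $\partial_t f_{\mathbf{X}^a}=-\partial_{x_1}J_1-\partial_{x_2}J_2$ with probability currents $J_l=\mu_l f_{\mathbf{X}^a}-\tfrac{\sigma_l^2}{2}\partial_{x_l}f_{\mathbf{X}^a}-\tfrac{\sigma_1\sigma_2\rho}{2}\partial_{x_{3-l}}f_{\mathbf{X}^a}$, where $3-l$ denotes the complementary index. The joint density of $(X_i^a,T_j)$ is the probability flux absorbed through the line $x_j=B_j$, i.e. $f_{(X_i^a,T_j)}(x_i,t)=J_j|_{x_j=B_j}$. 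By the absorbing condition \eqref{abs}, $f_{\mathbf{X}^a}$ vanishes along the whole line $x_j=B_j$, so both the drift term and the tangential-derivative term (the derivative with respect to the other variable) drop out and only the normal diffusion survives,
\begin{equation*}
f_{(X_i^a,T_j)}(x_i,t)=-\frac{\sigma_j^2}{2}\left.\frac{\partial f_{\mathbf{X}^a}}{\partial x_j}\right|_{x_j=B_j}.
\end{equation*}

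The core computation is then the boundary normal derivative of \eqref{finalfa}, where I would exploit the geometry of the change of variables. At $x_2=B_2$ one has $\phi=0$ and $\bar r=\sigma_2(B_1-x_1)$, while at $x_1=B_1$ one has $\phi=\alpha$ and $\bar r=\sigma_1(B_2-x_2)$; in both cases $\bar r|_{x_j=B_j}=\sigma_j(B_i-x_i)$, which is precisely the Bessel argument appearing in $G_{ij}$. Since each term of the series $H$ carries the factor $\sin(n\pi\phi/\alpha)$ which vanishes on the boundary, the product rule leaves only the contribution in which this sine is differentiated, namely $\tfrac{n\pi}{\alpha}\cos(n\pi\phi/\alpha)\,\partial_{x_j}\phi$ evaluated on the boundary. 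The values $\cos(n\pi\phi/\alpha)|_{\phi=0}=1$ and $\cos(n\pi\phi/\alpha)|_{\phi=\alpha}=(-1)^n$, combined with the sign of $\partial_{x_j}\phi$ and the sign $-\sigma_j^2/2$ from the flux, produce exactly the coefficients $\delta_1=1$ and $\delta_2=(-1)^{n+1}$. A direct computation of $\partial_{x_j}\phi$ on the boundary gives a factor proportional to $1/(B_i-x_i)$, accounting for that denominator in \eqref{facondXTWiener}, while the prefactor $2/(\alpha K_3 t)$ of \eqref{finalfa} times the $n\pi/\alpha$ yields the $\pi/\alpha^2$ and the factor $n$ in $G_{ij}$. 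Specialising the surviving exponentials of \eqref{finalfa} at the boundary and dividing by $f_{T_j}$ then reorganises the Gaussian exponents into the single exponent displayed in \eqref{facondXTWiener}.

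The main obstacle is the final step: rigorously justifying the term-by-term differentiation of the Bessel series $H$ on the absorbing boundary (uniform convergence of the differentiated series, controlled via the asymptotics of $I_{n\pi/\alpha}$) and carrying out the lengthy but routine algebra that recombines the several exponential factors of $f_{\mathbf{X}^a}$ and of the inverse Gaussian $f_{T_j}$ into the stated exponent. The geometric derivatives $\partial_{x_j}\bar r$ and $\partial_{x_j}\phi$ at the boundary, though elementary, must be computed carefully to track the signs that distinguish the cases $i=1$ and $i=2$.
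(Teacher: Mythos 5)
Your proposal is correct, and it would reach the stated formula, but it takes a genuinely different route from the paper. The paper does not go back to the flux through the absorbing boundary: it starts from the closed-form conditional density $f_{X_i^a\vert X_j^a}(x_i,t\vert x_j,t)$ of Corollary \ref{corx} and obtains $f_{X_i^a\vert T_j}(x_i\vert t)$ as the limit $x_j\to B_j$ of that expression, where the product $[1-\exp(2(B_j-x_{0j})(x_j-B_j)/\sigma_j^2 t)]^{-1}H(\bar r,\bar r_0,\phi,\phi_0,t)$ is an indeterminate form of type $0\cdot\infty$ resolved term by term with l'H\^opital's rule. You instead identify $f_{(X_i^a,T_j)}$ with the absorbed probability current $-\tfrac{\sigma_j^2}{2}\,\partial_{x_j}f_{\mathbf{X}^a}\vert_{x_j=B_j}$ (correctly noting that the drift and tangential-derivative parts of the flux vanish because of \eqref{abs}) and then divide by the univariate inverse Gaussian $f_{T_j}$. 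The two routes are equivalent: applying l'H\^opital to $f_{\mathbf{X}^a}/f_{X_j^a}$ as $x_j\to B_j$ gives precisely the ratio of the two normal derivatives, i.e.\ the ratio of the bivariate to the univariate boundary flux, and the computational core — differentiating $\sin(n\pi\phi/\alpha)$ on the boundary, evaluating $\cos(n\pi\phi/\alpha)$ at $\phi=0$ or $\phi=\alpha$ to produce $\delta_1$ and $\delta_2$, and tracking $\partial_{x_j}\phi\propto 1/(B_i-x_i)$ and $\bar r\vert_{x_j=B_j}=\sigma_j(B_i-x_i)$ — is identical in both. What your route buys is a transparent probabilistic justification of the conditioning (the paper tacitly assumes that conditioning on $T_j=t$ is the limit of conditioning on $X_j^a(t)=x_j$) and it delivers the joint density $f_{(X_i^a,T_j)}$ of the subsequent corollary as an intermediate step rather than as an afterthought; what the paper's route buys is brevity, since Corollary \ref{corx} is already available and only a one-variable limit remains. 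Both arguments share the same unaddressed analytic point you flag, namely the interchange of the limit/differentiation with the infinite Bessel series. One bookkeeping remark: your prefactor check lands on $\sigma_j\sqrt{2\pi t}/[\alpha^2(B_i-x_i)(B_j-x_{0j})]$ without the factor $\pi$ that appears in the statement of Lemma \ref{corfa}; redoing the count with the explicit constant $\partial_{x_j}\phi\vert_{x_j=B_j}=\mp\sigma_i\sqrt{1-\rho^2}/(\sigma_j(B_i-x_i))$ shows the $\pi$ from $n\pi/\alpha$ does survive, so the Lemma's prefactor is the correct one and it is the paper's subsequent corollary for $f_{(X_i^a,T_j)}$ that is missing a factor of $\pi$ — your method, carried out in full, actually detects this inconsistency.
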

The proof of Lemma \ref{corfa} is given in Appendix E. From Lemma \ref{corfa} and $f_{T_j}$, it follows 
\begin{corollary}
The joint density $f_{(X_i^a,T_j)}$ is given by
{\footnotesize{\begin{eqnarray}
f^a_{(X_{i},T_{j})}(x_i, t)&=&\frac{1}{\alpha
^{2}(B_{i}-x_i)t}\exp \left( -K_{i}\left[ \frac{\sigma _{i}}{%
\sigma _{j}}(B_{j}-x_{0j})\rho -(x_i-x_{0i}) + N_j t\right] -\frac{(B_j-x_{0j}-\mu_j t)^2}{2\sigma^2_j t} \right) \nonumber\\[-1.5ex] \label{facondXTWiener}\\[-1.5ex]
&\times &\exp \left(-\frac{[\rho \sigma
_{i}(B_{j}-x_{0j})-\sigma _{j}(B_{i}-x_{0i})]^{2}+\sigma
_{j}^{2}(B_{i}-x_i)^{2}}{2K_{3}^{2}t}\right) G_{ij}(\bar{r}_{0},\phi
_{0},x_i,t).\nonumber
\end{eqnarray}}}
\end{corollary}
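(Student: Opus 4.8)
The statement is a joint density, so the natural route is the elementary factorisation of a joint density into a conditional density times a marginal. Concretely, the plan is to write
\[
f_{(X_i^a, T_j)}(x_i, t) = f_{X_i^a\vert T_j}(x_i\vert t)\, f_{T_j}(t).
\]
This identity is exact: on the event $\{T_j = t\}$ the value $X_i^a(T_j)$ coincides with $X_i^a(t)$, so the conditional density of $X_i^a(T_j)$ given $T_j=t$ is precisely the quantity $f_{X_i^a\vert T_j}$ defined in Section \ref{Section2} and already evaluated in Lemma \ref{corfa}. Thus the only ingredient still needed is the marginal law of $T_j$.

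Next I would supply $f_{T_j}$ in closed form. By the modelling assumptions and Remark \ref{rem}, the component that has not yet crossed evolves as a one-dimensional Wiener process with drift $\mu_j$ and infinitesimal variance $\sigma_j^2$, started at $x_{0j}$, so $T_j$ is its first passage time through $B_j$ and its density is the inverse Gaussian
\[
f_{T_j}(t) = \frac{B_j - x_{0j}}{\sigma_j\sqrt{2\pi t^3}}\,\exp\!\left(-\frac{(B_j - x_{0j} - \mu_j t)^2}{2\sigma_j^2 t}\right),
\]
which is classical \cite{coxMiller}. I would then substitute the expression for $f_{X_i^a\vert T_j}$ from Lemma \ref{corfa} and multiply. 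The bookkeeping is routine: the factor $(B_j-x_{0j})^{-1}$ in the conditional prefactor cancels the $(B_j-x_{0j})$ of the inverse Gaussian; the time factor $\sqrt{2\pi t}$ of the conditional density combines with the $t^{-3/2}$ of $f_{T_j}$ to leave a single $t^{-1}$; the exponent of $f_{T_j}$ is absorbed as the additive term $-(B_j-x_{0j}-\mu_j t)^2/(2\sigma_j^2 t)$ inside the first exponential of the target formula; and the Bessel series $G_{ij}(\bar r_0,\phi_0,x_i,t)$ is carried over unchanged. Collecting these yields the displayed expression.

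The main obstacle is purely computational rather than conceptual, since the factorisation step is exact and $f_{T_j}$ is known explicitly. The delicate point is the careful cancellation of the prefactors, in particular confirming that the correct power of $t$ and the right combination of the constants $\sigma_j$, $\alpha$ and $\pi$ survive, and that the two separate Gaussian exponents merge cleanly into the single exponential shown in the corollary; this is exactly the place where a stray multiplicative constant would be easy to lose or introduce.
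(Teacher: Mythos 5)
Your proof is correct and is exactly the paper's own argument: the corollary is stated as following ``from Lemma \ref{corfa} and $f_{T_j}$'', i.e.\ by multiplying the conditional density of Lemma \ref{corfa} by the inverse Gaussian density of $T_j$ from \cite{coxMiller}, with the cancellations of $\sigma_j$, $(B_j-x_{0j})$ and the powers of $t$ that you describe. (Carrying the prefactors through actually leaves $\pi/[\alpha^{2}(B_i-x_i)t]$ rather than $1/[\alpha^{2}(B_i-x_i)t]$, so the stray factor of $\pi$ you warn about appears to be a misprint in the paper's stated formula, not a defect of your route.)
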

The same result can be obtained by solving the system \eqref{Volterra} (calculations not shown). Since the dependence between $X_1$ and $X_2$ is only determined by the diffusion matrix and not by the drift term, 
the slowest component becomes independent on the fastest one after time $T$ in presence of absorbing boundaries. That is $f_{T_i}(t_i|(B_j,x_i),t_j)=f_{T_i}(t_i|x_i,t_j)$. Thus the joint pdf of $(T_1,T_2)$ for absorbing or killing boundaries is the same. In particular, it holds
\begin{theorem}\label{theoW}
In presence of either absorbing or killing boundaries, the joint density of $(T_1,T_2)$ is
{\footnotesize{\begin{eqnarray}
f_{(T_1,T_2)}(t_1,t_2)&=&\sum^2_{i,j=1; i\neq j}
\frac{\sqrt{2\pi }}{2\alpha
^{2}\sigma _{j}t_i\sqrt{(t_j-t_i)^{3}}}\exp \left( K_{i}(B_{i}-x_{0i})-K_{j}x_{0j}+\frac{\mu
_{j}B_{j}}{\sigma _{j}^{2}} -K_{i}N_{j}t_i  -\frac{\mu _{j}^{2}t_j}{2\sigma _{j}^{2}}\right)\nonumber\\[-1.5ex] \label{t1t2Wiener}\\[-1.5ex]
 &\times& \int_{-\infty
}^{B_{j}}\exp \left( -K_{i}\rho 
\frac{\sigma _{i}}{\sigma _{j}}x_{j} \right)\exp \left( -\frac{\bar{r}_{0}^{2}+\sigma
_{i}^{2}(B_{j}-x_{j})^{2}}{2t_iK_{3}^{2}}-\frac{(B_{j}-x_{j})^{2}}{2\sigma
_{j}^{2}(t_j-t_i)}\right) G_{ji}(\bar{r}%
_{0},\phi _{0},x_j,t_i)dx_{j}.\nonumber
\end{eqnarray}}}
\end{theorem}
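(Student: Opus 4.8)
The plan is to substitute the available closed forms into the absorbing-boundary representation \eqref{ProbTT} and to show that each of its two integrals collapses onto one summand of \eqref{t1t2Wiener}. Fix the ordered pair $(i,j)$, $i\neq j$, corresponding to the scenario in which $X_i$ is absorbed first at time $t_i$ and $X_j$ reaches its level at the later time $t_j$; this is the first integral of \eqref{ProbTT} when $(i,j)=(1,2)$ and the second when $(i,j)=(2,1)$. Two inputs are needed. The joint density $f_{(X_j^a,T_i)}$ I write as the product $f_{X_j^a\vert T_i}\,f_{T_i}$, with $f_{X_j^a\vert T_i}$ taken from Lemma \ref{corfa} and $f_{T_i}$ the classical inverse-Gaussian first passage density of the $(\mu_i,\sigma_i^2)$ Wiener process \cite{coxMiller}. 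For the factor $f_{T_j}(t_j\vert(B_i,x_j),t_i)$ I use the defining feature of the absorbing regime (Fig. \ref{fig}): after $X_i$ is absorbed the surviving coordinate evolves as a free $(\mu_j,\sigma_j^2)$ Wiener process, so by the Markov property this factor is exactly the one-dimensional first passage density from $x_j$ to $B_j$ over the elapsed time $t_j-t_i$, independent of the correlation $\rho$.

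With these substitutions the generic integral reads $\int_{-\infty}^{B_j}f_{T_j}(t_j\vert(B_i,x_j),t_i)\,f_{X_j^a\vert T_i}(x_j\vert t_i)\,f_{T_i}(t_i)\,dx_j$, and I would reduce the integrand in two stages. At the level of the prefactors, the numerator $(B_j-x_j)$ of the one-dimensional passage density cancels the $(B_j-x_j)^{-1}$ in $f_{X_j^a\vert T_i}$, the $(B_i-x_{0i})$ of $f_{T_i}$ cancels the matching factor in the denominator of Lemma \ref{corfa}, the $\sigma_i$'s cancel, and $\sqrt{2\pi t_i}/\sqrt{2\pi t_i^{3}}=t_i^{-1}$; what survives is the constant $\pi/\sqrt{2\pi}=\sqrt{2\pi}/2$ times $\{\alpha^{2}\sigma_j t_i\sqrt{(t_j-t_i)^{3}}\}^{-1}$, exactly the prefactor of \eqref{t1t2Wiener}, with the Bessel series $G_{ji}(\bar r_0,\phi_0,x_j,t_i)$ carried along unchanged.

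At the level of the exponent I would first trade the squared bracket of Lemma \ref{corfa} for $\bar r_0$ using
\[
\bar r_0^{2}-[\rho\sigma_j(B_i-x_{0i})-\sigma_i(B_j-x_{0j})]^{2}=\sigma_j^{2}(1-\rho^{2})(B_i-x_{0i})^{2},
\]
so that, with $K_3^{2}=\sigma_i^{2}\sigma_j^{2}(1-\rho^{2})$, the term $-[\cdots]^{2}/(2K_3^{2}t_i)$ becomes $-\bar r_0^{2}/(2K_3^{2}t_i)+(B_i-x_{0i})^{2}/(2\sigma_i^{2}t_i)$; the last summand cancels against the $-(B_i-x_{0i})^{2}/(2\sigma_i^{2}t_i)$ released by expanding the square in $f_{T_i}$. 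After this cancellation the terms $-K_jx_{0j}$ and $\mu_jB_j/\sigma_j^{2}$ appear directly, and collecting the remaining pieces by their dependence on $x_j$, on $B_i-x_{0i}$ and on the times reduces the matching to the three identities
\[
K_j-\frac{\mu_j}{\sigma_j^{2}}=-K_i\rho\frac{\sigma_i}{\sigma_j},\qquad K_i=\frac{\mu_i}{\sigma_i^{2}}-K_j\rho\frac{\sigma_j}{\sigma_i},\qquad K_jN_i-K_iN_j=\frac{\mu_j^{2}}{2\sigma_j^{2}}-\frac{\mu_i^{2}}{2\sigma_i^{2}},
\]
which I would verify directly from the definitions of $K_1,K_2,N_1,N_2$ (the numerator of the third factorizes as $(\sigma_i^{2}\mu_j^{2}-\sigma_j^{2}\mu_i^{2})(1-\rho^{2})$). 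The first produces the coefficient $-K_i\rho(\sigma_i/\sigma_j)x_j$ inside the integral, the second the factor $e^{K_i(B_i-x_{0i})}$, and the third combines $-K_jN_it_i$ with the drift terms of the two passage densities into $-K_iN_jt_i-\mu_j^{2}t_j/(2\sigma_j^{2})$. Summing the $(i,j)=(1,2)$ and $(2,1)$ contributions then yields \eqref{t1t2Wiener}.

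The hard part is purely organizational: marshalling the several Gaussian exponents so that the above cancellations become visible, and in particular recognizing that the squared-bracket term must be converted to $\bar r_0$ before the $(B_i-x_{0i})^{2}$ contributions annihilate. A secondary but essential point is the normalization: one must use the full product $f_{X_j^a\vert T_i}\,f_{T_i}$ built from Lemma \ref{corfa}, since it is this product that retains the factor $\pi$ producing the constant $\sqrt{2\pi}/2$ in \eqref{t1t2Wiener}.
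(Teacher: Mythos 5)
Your proposal is correct and follows exactly the paper's route: the paper's proof is the one-line instruction to plug the one-dimensional first passage density $f_{T_i}$ from \cite{coxMiller} and the density of Lemma \ref{corfa} (equivalently, the corollary giving $f_{(X_j^a,T_i)}=f_{X_j^a\vert T_i}f_{T_i}$) into \eqref{ProbTT}, which is precisely what you do. Your three algebraic identities for $K_i$, $N_i$ and the conversion of the squared bracket into $\bar r_0^2$ all check out, so you have simply supplied the computation the paper leaves implicit.
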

\begin{proof}
It follows by plugging $f_{T_i}$, given in \cite{coxMiller}, and (\ref{facondXTWiener}) into (\ref{ProbTT}).\qed \end{proof} 
\begin{remark}
If $(\mu_1,\mu_2)=(0,0)$, mimicking the proof in \cite{Domine, Metzler}, the joint density \eqref{t1t2Wiener} becomes 
{\footnotesize{\begin{eqnarray}\label{densmu0}
f_{(T_1,T_2)}(t_1,t_2)&=&\sum^2_{i,j=1; i\neq j}
\frac{\pi \sqrt{1-\rho ^{2}}\exp \left( -\frac{\bar{r}_{0}^{2}[t_j+t_i(1-2\rho ^{2})]%
}{4K_{3}^{2}t_i(t_j-t_i\rho ^{2})}\right)}{2\alpha ^{2}\sqrt{%
t_i(t_j-t_i\rho ^{2})}(t_j-t_i)}\sum_{n=1}^{\infty }\delta_j n\sin\left( 
\frac{n\pi \phi _{0}}{\alpha }\right)I_{\frac{n\pi }{2\alpha }}\left( \frac{\bar{r}%
_{0}^{2}(t_j-t_i)}{4K_{3}^{2}t_i(t_j-t_i\rho ^{2})}\right); \\
\nonumber f_{(T_1,T_2)}(t,t)&=&
\begin{cases}
0  & \textrm{if } \rho \in (-1,0)\\
\infty & \textrm{if } \rho \in (0,1)\\
\frac{(B_1-x_{01})(B_2-x_{02})}{2\pi\sigma_1\sigma_2 t^3 }\exp\left\{-\frac{\sigma_2^2(B_1-x_{01})^2+\sigma_1^2(B_2-x_{02})^2}{2\sigma^2_1\sigma_2^2 t}\right\}
 & \textrm{if } \rho =0\\
\end{cases}.
\end{eqnarray}}}
\end{remark}
When $\rho=0$, the processes are not correlated and thus $f_{(T_1,T_2)}=f_{T_1}f_{T_2}$, as expected. For the same reason, $f^a_{\mathbf{X}}=f^a_{X_1}f^a_{X_2} , f^a_{X_j|X_i}=f^a_{X_j}$ and $f^a_{X_j|T_i}=f^a_{X_j}$  (calculations not reported).

\begin{remark}
To compare \eqref{densmu0} with the corresponding expression in \cite{Iyengar}, we set $s=t_1<t=t_2$ and $\tilde{r}_{0}=\bar{r}_{0}/K_{3}$, because different transformations are used. Since
{\footnotesize{\[
\sqrt{1-\rho ^{2}}=\sin(\alpha), \qquad \rho ^{2}=\cos ^{2}(\alpha), \qquad 2(t-s\rho^2) = (t-s)+(t-s\cos(2\alpha) ),
\]}}
we obtain
{\footnotesize{
\[
f_{(T_1,T_2)}(s,t)=\frac{\pi \sin(\alpha)\exp \left( -\frac{\tilde{r}_{0}^{2}(t-s\cos(2\alpha) )}{2s[(t-s)+(t-s\cos(2\alpha) )]}\right) }{2\alpha ^{2}\sqrt{s(t-s\cos ^{2}(\alpha)
)}(t-s)} \sum_{n=1}^{\infty }(-1)^{n+1}n\sin \left(\frac{n\pi \phi _{0}}{\alpha }\right
)I_{\frac{n\pi }{2\alpha }}\left( \frac{\tilde{r}_{0}^{2}(t-s)}{%
2s[(t-s)+(t-s\cos(2\alpha))]}\right), 
\]}}
for $s<t$.  The result differs from that in \cite{Iyengar}, as already discussed in \cite{Domine}, and agrees with that in \cite{Domine,Metzler}. Since the error disappears when $t_1\to t_2$, the expression $f_{(T_1,T_2)}(t,t)$ in \cite{Iyengar} is correct.
\end{remark}
In Fig. \ref{FigW} we report the theoretical joint density and contour plots of $(T_1,T_2)$ for two correlated Wiener processes in presence of either absorbing or killing boundaries when $\sigma_1=\sigma_2=1$ and $\rho=0.5$. We consider a symmetric and a non symmetric process by choosing null drifts and $B_1=B_2=1$ (upper panels), and $\mu_1=1, \mu_2=1.5$ and $B_1=B_2=10$ (lower panels), respectively. Note that $f_{(T_1,T_2)}$ is given by \eqref{densmu0} and  \eqref{t1t2Wiener}, respectively. For symmetric processes with relatively small variances, the probability mass of $f_{(T_1,T_2)}$ is concentrated in the area close to the diagonal $t_1=t_2$. For non symmetric processes, the probability mass is concentrated around the means of the FPTs, i.e. $\mathbb{E}[T_1]=10$ and $\mathbb{E}[T_2]=6.67$, and it is spread out according to their variances, i.e. $\textrm{Var}(T_1)=12.5$ and $\textrm{Var}(T_2)=3.70$ for the considered parameter choice. In presence of crossing boundaries, a closed expression for $f_{(T_1,T_2)}$ using \eqref{ProbTT3} cannot be derived since an analytical expression for $f_{(X_j,T_i)}$ is not available. However $f_{(T_1,T_2)}$ can be numerically approximated as described in Remark \ref{remark2}.
\begin{figure}[t!] 
\centering 
\includegraphics[scale=.8]{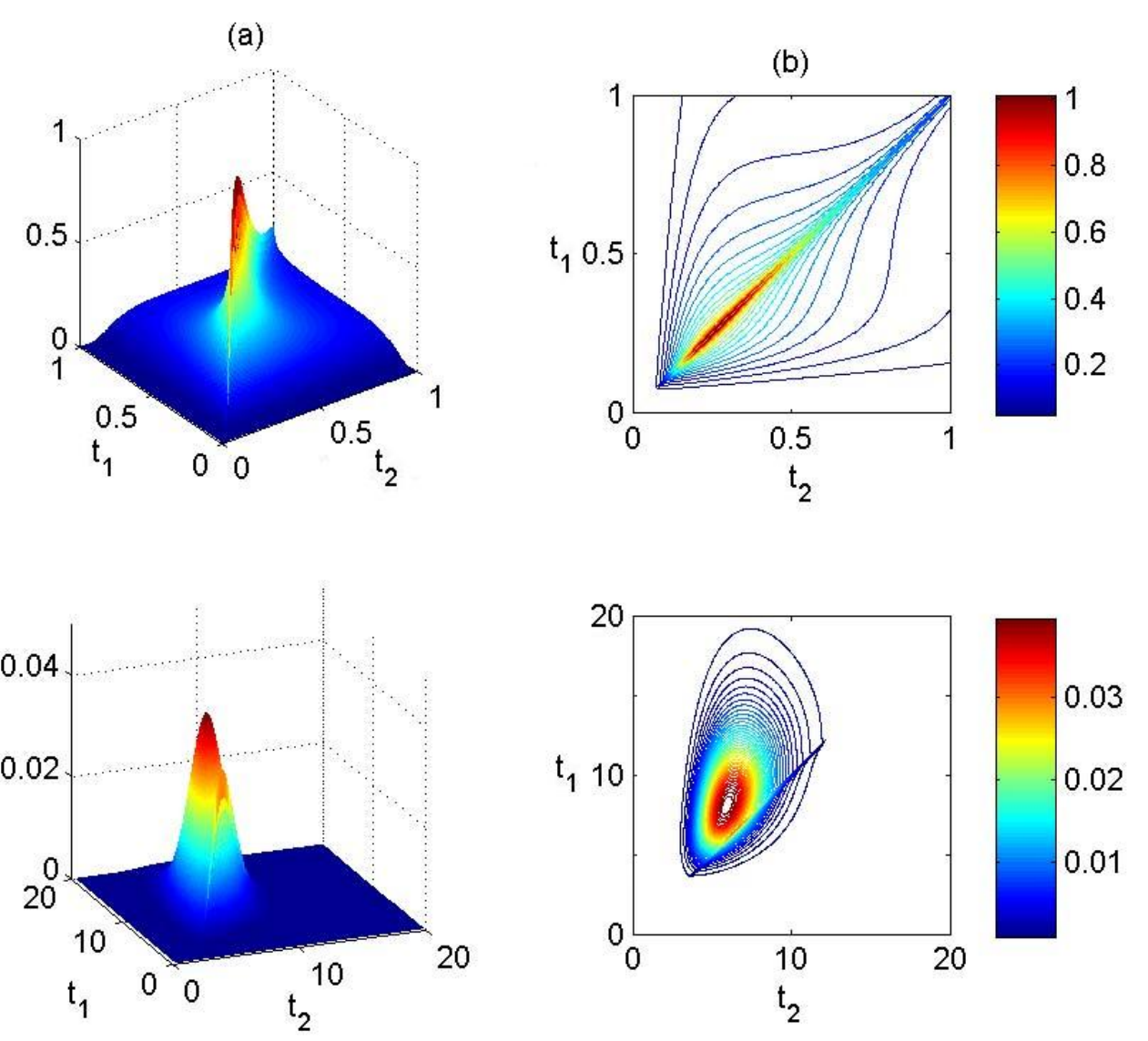}
\caption{Theoretical joint densities and contour plots of $(T_1,T_2)$ for two-dimensional correlated Wiener processes in presence of either absorbing or killing boundaries when $\sigma_1=\sigma_2=1$ and $\rho=0.5$. Top panels: $\mu_1=\mu_2=0, B_1=B_2=1 $. Bottom panels: $\mu_1=1, \mu_2=1.5, B_1=B_2=10$ . Panel (a): $f_{(T_1,T_2)}$. Panel (b): contour plots of $(T_1,T_2)$.}
\label{FigW}
\end{figure}

\section{Examples}\label{section7}
Here we discuss some examples  to illustrate the proposed numerical algorithm. First we compare theoretical results of the previous Section on the bivariate Wiener process with those obtained by means of our numerical approach. Then we describe a modeling problem in the neuroscience framework and we use the proposed algorithm to determine quantities of interest for such models.

\subsection{Bivariate Wiener process} 
We numerically illustrate the convergence of the algorithm and discuss its performance in the case of a bivariate, correlated and symmetric Wiener process with $\mu_1=\mu_2=0$, $\sigma_1=\sigma_2=1$ and $\rho=0.5$ in presence of either killing or absorbing boundaries $B_1=B_2=1$. We consider discretization steps given by $h,r=0.01, 0.05, 0.1$ and $0.2$.  We compare the values of $f^a_{(X_i,T_j)}$ and $\widetilde{f}^a_{(X_i,T_j)}$ by considering 
both the maximum of the absolute value of the error $|E^{(i)}(y_{u_i},t_k)|$ and its mean squared error $\textrm{MSE}(f^a_{(X_i,T_j)})$, defined by
\[
\textrm{MSE}(f^a_{(X_i,T_j)})=\frac{ \sum_{y_{u_i},t_k}\left( f^a_{(X_i,T_j)}(y_{u_i},t_k)-\widetilde{f}^a_{(X_i,T_j)}(y_{u_i},t_k)\right)^2}{(m_i+1) N}.
\]
Since the process $\mathbf{X}$ is symmetric, the errors $E^{(1)}(y_{u_1},t_k)$ and $E^{(2)}(y_{u_1},t_k)$ are similar and thus we only discuss the first. 
The maximum absolute error of the algorithm and its mean squared error for $r=1$ are reported in Table \ref{tableW}. As expected from Theorem \ref{error}, $\max |E^{(i)}(y_{u_i},t_k)|$ is no larger than $O(h)+O(r)$. 
\begin{remark}
The maximum error of the proposed approximation appears in correspondence of knots proximal to the boundary. Excessively small space integration steps are discouraged by this fact. We suggest to choose a smaller adaptive step for values far from the boundary and a larger one for values near the boundary.
\end{remark}

\begin{table}[t]
\centering
\begin{tabular}{|c|c|c|}
\hline
$h$ &  $\max |E^{(1)}(y_{u_1},t_2)|$ & \textrm{MSE}$(f^a_{(X_1,T_2)})$\\\hline
0.01& 0.0335&   0.0028\\
0.05& 0.1779& 0.0188 \\
0.1& 0.2505& 0.0338  \\  
0.2& 0.3135&    0.0517 \\
\hline
\end{tabular}
\caption{Maximum absolute error $\max |E^{(1)}(f^a_{(X_1,T_2)})|$ and mean squared error of the algorithm for a bivariate, correlated and symmetric Wiener process for $\mu_1=\mu_2=0, \sigma_1=\sigma_2=1$ and $\rho=0.5$. Different values of the time discretization step $h$ when the space discretization step $r$ is $0.1$. Since the process is symmetric, the performance of the error with respect to $f^a_{(X_2,T_1)}$ is similar and thus not reported.}
\label{tableW}
\end{table}

\subsection{Bivariate Ornstein-Uhlenbeck as model for neural spiking activity}
The membrane of a neuron is characterized by a difference of potential between its internal and external part. This difference changes in time due to the arrival of excitatory and inhibitory inputs from other surrounding neurons. When the membrane depolarization attains a specific value, called threshold value, an electrical output, called spike, is released. After a spike the membrane potential spontaneously resets to a resting value and the membrane potential evolution restarts. Popular models for the neuronal dynamics are the so called Leaky-Integrate and Fire (LIF) models (see \cite{ReviewSac} for a review),  which identify firing times with FPTs of the process through a boundary. The OU process is probably the most commonly used LIF model. This is because it combines good levels of mathematical tractability, experimental fit and biological motivations. An extension to a LIF model describing the joint behavior of a set of interacting neurons has been recently formalized in \citep{TSJ}. There multivariate OU processes and their FPTs are obtained as diffusion limits of multivariate jump processes and their FPTs. The parameters of the limiting processes are a mixture of terms due to inputs which are specific for each neuron and other which are common to a group of neurons. In the bivariate case, the resulting OU process satisfies \eqref{diff} with
{{\begin{equation*}
\mathbb{\mu}(\mathbf{X}(t))=\left(
\begin{array}{c} 
\mu_1-\frac{X_1(t)}{\theta}\\ 
\mu_2-\frac{X_2(t)}{\theta}
\end{array}\right) 
\mbox{, } \qquad
\bm \Sigma\left(\bm X(t)\right)=\bm \Sigma=\left(\begin{array}{ll} 
\sigma_{11}& \sigma_{12}\\
 \sigma_{12}& \sigma_{22}
\end{array} \right),
\end{equation*}}}
for $\mu_i\in \mathbb{R}, \sigma_{ij}>0, i,j=1,2,\sigma_{12}\in \mathbb{R}$ and $\bm \Sigma$ positive-definite matrix. The interesting quantity is the joint FPT distribution which allows to study the behavior of the two dependent neurons.
\begin{figure}[t!]
\centering 
\includegraphics[scale=.8]{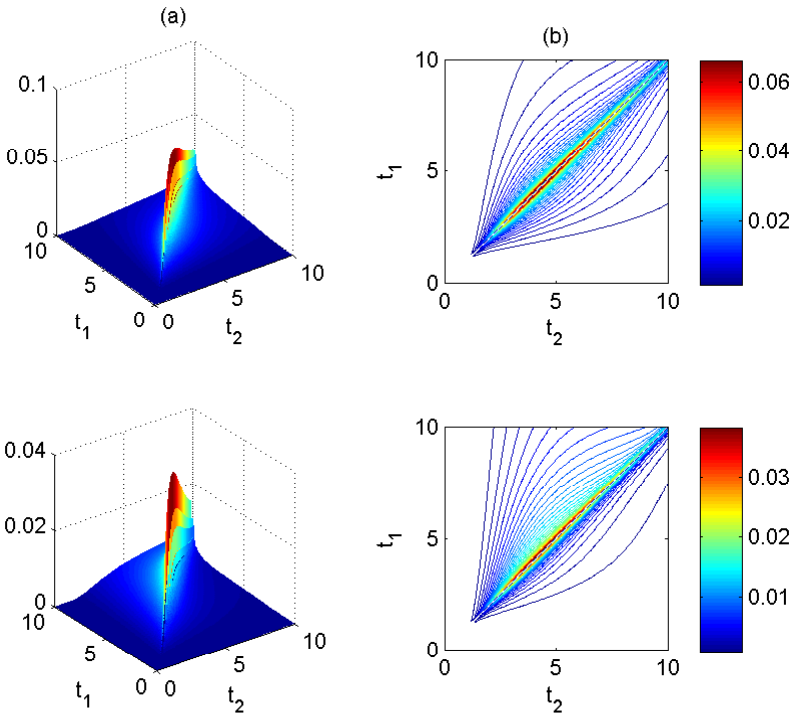}
\caption{Approximated joint densities and contour plots of $(T_1,T_2)$ for two-dimensional correlated OU processes in presence of absorbing boundaries, when $\sigma_{11}=\sigma_{22}=2, \sigma_{12}=1$ and $B_1=B_2=10 $. Top panels: $\mu_1=\mu_2=1.5 $. Bottom panels: $\mu_1=0.95, \mu_2=1.5 $. Panel (a): $\hat f_{(T_1,T_2)}$. Panel (b): contour plots of $(T_1,T_2)$.}
\label{figOU}
\end{figure}

Combining the proposed algorithm and that of $f_{T_j}$, as described in Remark \ref{remark2}, we are able to approximate the joint density of the spike times, i.e. the joint FPT density of the OU process. In Fig. \ref{figOU} we report the approximated joint density and the contour plots of $(T_1,T_2)$ for two symmetric correlated OU with $\mu_1=\mu_2=1.5$ (top panels) and for two non symmetric correlated OU with $\mu_1=0.95$ and $\mu_2=1.5$ (bottom panels). In both cases $\theta=10, \sigma_{12}=1, \sigma_{ii}=2$ and $B_i=10 $, for $i=1,2$. The parameter values are chosen according to biologically acceptable ranges. We omit to report the units to simplify the reading. Looking at the figure we recognize some neuronal features caught by the model. In the first case both the asymptotic membrane potential means $\mu_i\theta$ are above the boundaries $B_i, i=1,2$. Not surprisingly the probability mass of $f_{(T_1,T_2)}$ for the symmetric OU is concentrated around the diagonal $t_1=t_2$, suggesting that the epochs of the passage times are similar. Hence these parameter values can be used to model instances characterized by almost synchronous spikes. For the asymmetric OU, the first component has asymptotic mean $\mu_1\theta$ below $B_1$, and thus its FPT is determined by the noise. As a consequence, the probabilistic mass of $f_{(T_1,T_2)}$ is concentrated in the region $t_1>t_2$ and the firing rates of two neurons differ. Thanks to the algorithm to approximate the joint firing time distribution, further analyses can be done, but we postpone them to a future paper. Extensions of our method to higher dimensions could allow to describe the firing activity of groups of neurons in a network, as modeled in \citep{TSJ}. 
\section{Conclusion}
For general {bivariate} diffusion processes, explicit expressions of the joint FPT density are not available. For killing, absorbing or crossing boundaries, we show how $f_{(T_1,T_2)}$ depends on the unknown density $f^a_{(X_i,T_j)}$. To avoid the prohibitive computational efforts required to approximate $f^a_{(X_i,T_j)}$ via Monte Carlo simulations \cite{Zhou}, we suggest to use our ad hoc numerical method, whose linear convergence in time and space has been proved. 

The choice of considering constant boundaries is not a shortcoming. Indeed both the theoretical results in Section \ref{section3} and the numerical algorithm can be easily  extended to the case of time dependent boundaries.

In presence of either absorbing or killing boundaries, the joint FPT density can be numerically obtained combining our algorithm for $f^a_{(X_i,T_j)}$ with that for $f_{T_j}$ \cite{ReviewSac}. Theoretical expressions for $f_{(T_1,T_2)}$ and other quantities of interest, such as $f^a_{(X_i,T_j)}$ and $f^a_{\mathbf{X}}$, i.e. the transition density of the process constrained to be below the boundaries, are available for a bivariate correlated Wiener process with drift. Here we derive them, correcting  the formulas in \cite{Domine}.  In presence of crossing boundaries, the joint FPT density can be numerically evaluated combining our algorithm with that proposed for $f_{(X_i, T_j)}$ for bivariate Gaussian diffusion processes \cite{Benedetto}.   

Our approach can be extended to other first passage problems. Consider for example a bivariate process whose component is reset whenever it attains its boundary, and then both components evolve until the first crossing of the slowest component. Also in this scenario, which is  commonly used for modeling neural spikes activity in neuroscience \cite{STZ}, the joint FPT density  depends on both $f^a_{(X_i, T_j)}$ and $f_{(X_j, T_i)}$.  

Finally we emphasize that our approach and results can be extended to the $k$-dimensional case. The joint FPT density can be obtained mimicking Theorem \ref{theo}, where the $k$-dimensional extension of $f^a_{(X_i, T_j)}$ can be obtained as solution of a system of $k$ Volterra-Fredholm first kind integral equations. An alternative approach would be to obtain it by first solving a $k$-dimensional Kolmogorov forward equation in presence of absorbing boundaries, generalizing Lemma \ref{lemmafaj}, and then doing the limit for $X_j\to B_j$. Since analytical solutions will be difficult to obtain, one could approximate them by extending our algorithm, starting from the extended Theorem \ref{Volt} and mimicking Section \ref{section4}. New numerical issues may arise and we postpone this study to a future work.

\section*{Acknowledgments}
L.S. and C.Z. were supported by University of Torino (local grant 2013, ZUCCRILO 13: Modelli stocastici e statistici per le applicazioni) and by project AMALFI -
Advanced Methodologies for the AnaLysis and management of the
Future Internet (Universit\`{a} di Torino/Compagnia di San
Paolo). 

\appendix

\section*{Appendix A: Proof of Theorem \ref{theo}}\label{appA}
In presence of absorbing boundary $\bm B$, we have
\begin{eqnarray}
&&\mathbb{P}(T_1<t_1,T_2<t_2)\nonumber\\
&&{\qquad= \int_{t_0}^{t_1}\mathbb{P}(T_{1}<t_{1},T_{2}<t_{2}\vert T_{1}=s_1,T_1<T_2)f_{T_1}(s_1) ds_1+ \int_{t_0}^{t_2}\mathbb{P}(T_{1}<t_{1},T_{2}<t_{2}\vert T_{2}=s_2,T_2<T_1)f_{T_2}(s_2) ds_2\nonumber}\\
&& \qquad=\nonumber \int_{t_0}^{t_{1}}\int_{-\infty }^{B_{2}}\mathbb{P}(T_{2}< t_{2}\vert T_{1}=s_1,X_{2}^{a}(s_1)=x_2)f^a_{X_2|T_1}(x_2,s_1) f_{T_1}(s_1) dx_2 ds_1\\
&&\qquad+\nonumber\int_{t_0}^{t_{2}}\int_{-\infty }^{B_{1}}\mathbb{P}(T_{1}< t_{1}\vert T_{2}=s_2,X_{1}^{a}(s_2)=x_1)f^a_{X_1|T_2}(x_1,s_2) f_{T_2}(s_2) dx_1 ds_2\\
 &&\qquad = \int_{t_0}^{t_1}\int_{-\infty }^{B_{2}}\mathbb{P}(T_{2}<t_{2}|(B_1,x_2),s_1)f^a_{(X_2,T_1)}(x_2,s_1) dx_2 ds_1+
\int_{t_0}^{t_2}\int_{-\infty }^{B_{1}}\mathbb{P}(T_{1}<t_{1}|(x_1,B_2),s_2)f^a_{(X_1,T_2)}(x_1,s_2) dx_1 ds_2,\nonumber
\\[-1.5ex]
\label{con1}
\end{eqnarray}
where in the second equality we condition on the value of the component which has not yet reached its level at the time when the other component crosses its boundary, and in the last equality we use the Markov property, which holds because $\mathbf{X}$ and thus $\mathbf{X}^{a}$ are Markov processes. Finally, \eqref{ProbTT} follows by differentiating \eqref{con1} with respect to $t_1$ and $t_2$. 

In presence of crossing boundary $\bm B$, we assume that $X_1$ crosses $B_1$ at time $T_1=s_1<T_2$ and $\bm X(s_1)=(B_1,x_2)$. Then both components evolve and $X_2$ crosses $B_2$ at time $T_2$. Therefore  
\begin{equation}\label{conto2}
\mathbb{P}(T_{2}<t_{2}\vert \bm (B_1,x_2), s_1)=\int_{s_1}^{t_2} f_{T_2}\left(s_2|\bm (B_1,x_2),s_1\right) ds_2=\int_{s_1}^{t_2}\int_{-\infty}^{\infty} f_{(X_1,T_2)}\left((x_1,B_2),s_2|(B_1,x_2),s_1\right) dx_1  ds_2.
\end{equation}
A similar expression holds when $T_2=s_2<T_1$. Since \eqref{con1} is still valid  when $\bm B$ is absorbing, \eqref{ProbTT3} follows by plugging \eqref{conto2} and $\mathbb{P}(T_1<t_1|(x_1,B_2),s_2)$ into \eqref{con1}, and differentiating \eqref{con1} with respect to $t_1$ and $t_2$. 

\section*{Appendix B: Proof of Theorem \ref{Volt}}\label{appC}
Consider the exit times of the process $\mathbf{X}$. The survival distribution of $\mathbf{X}$, for $x_1>B_1$ and $x_2>B_2$, is given by
\begin{eqnarray}
\nonumber&&\bar F_\mathbf{X}(\mathbf{x},t)= \mathbb{P}\left(\mathbf{X}(t)\geq \mathbf{x},T_{1}<T_{2}\right)+ \mathbb{P}\left(\mathbf{X}(t)\geq \mathbf{x},T_{1}>T_{2}\right)\\
\nonumber&&\qquad=\int^{t}_{t_0} \int_{-\infty}^{B_2}\mathbb{P}\left(\mathbf{X}(t)\geq \mathbf{x}\vert T_1=\tau, {X_2(T_1)=y}\right)f^a_{(X_{2}, T_{1})}\left( y,\tau\right) dy d\tau \nonumber\\
 &&\qquad+\int^{t}_{t_0} \int_{-\infty}^{B_1}\mathbb{P}\left(\mathbf{X}(t)\geq \mathbf{x}, \vert T_2=\tau, {X_1(T_2)=y}\right)f^a_{(X_{1}, T_{2})}\left( y,\tau\right) dy d\tau\nonumber\\
&&\nonumber\qquad= \int^{t}_{t_0} \int_{-\infty}^{B_2}\mathbb{P}\left(\mathbf{X}(t)\geq \mathbf{x}\vert\ (B_1,y)\right)f^a_{(X_{2}, T_{1})}\left( y,\tau\right)
 dy d\tau+\int^{t}_{t_0} \int_{-\infty}^{B_1}\mathbb{P}\left(\mathbf{X}(t)\geq \mathbf{x}\vert(y,B_2)\right) f^a_{(X_{1}, T_{2})}\left( y,\tau\right) dy d\tau,\\[-1.5ex]\label{Fdim}
\end{eqnarray} 
where the last equality follows by the strong Markov property. Then \eqref{Volterra} follows by choosing 
$\mathbf{x}=(x_1,B_2)$ and $\mathbf{x}=(B_1,x_2)$, respectively, and \eqref{Volterradensity} follows by differentiating (\ref{Fdim}) with respect to $\mathbf{x}$.

\section*{Appendix C: Proof of Lemma \ref{lemma1}}\label{appD}
Let us focus on $E^{(1)}_{k,u_1}$ in \eqref{errorEa}. Subtracting \eqref{ftilda1} from \eqref{fcaplimit}, we obtain
\begin{subequations}\label{2}
\begin{align}
&\hat{f}^a_{(X_{1}, T_{2}) }\left(  x_1 , t_k\right)-\tilde{f}^a_{(X_{1}, T_{2}) }\left(  x_1 , t_k\right)=\sum_{\rho=0}^{k-1}\left[-\int_{-\infty}^{B_1} \frac{\partial\bar{F}_{\mathbf{X}}((x_1,B_2),t_k\vert( y,B_2),t_\rho)}{\partial x_1}\hat{f}^a_{(X_{1}, T_{2}) }\left( y , t_\rho\right)dy \right. \nonumber\\
&-  \int_{-\infty}^{B_2} \frac{\partial\bar{F}_{\mathbf{X}}((x_1,B_2),t_k\vert(B_1,y),t_\rho)}{\partial x_1}\hat{f}^a_{(X_{2}, T_{1}) }\left( y , t_\rho\right)dy +r_1 \sum_{u_1=1}^{m_1} \frac{\partial\bar{F}_{\mathbf{X}}((x_1,B_2),t_k\vert( y_{u_1},B_2),t_\rho)}{\partial x_1}\tilde{f}^a_{(X_{1}, T_{2}) }\left(y_{u_1}, t_\rho\right) \nonumber\\
&  \left. 
+ r_2\sum_{u_2=1}^{m_2} \frac{\partial\bar{F}_{\mathbf{X}}((x_1,B_2),t_k\vert(B_1,y_{u_2}),t_\rho)}{\partial x_1}\tilde{f}^a_{(X_{2}, T_{1}) }\left(y_{u_2} , t_\rho\right)\right]. \label{2a}
 \end{align}
\end{subequations}
Note that the term on the left hand side of \eqref{2a} is equal to the term on the right hand side for $\rho=k$, due to conditions \eqref{Flimit}. Rewriting  it with respect to $t_{k-1}$, we obtain
\begin{subequations}\label{3}
\begin{align}
&\sum_{\rho=0}^{k-1}\left[-\int_{-\infty}^{B_1} \frac{\partial\bar{F}_{\mathbf{X}}((x_1,B_2),t_{k-1}\vert( y,B_2),t_\rho)}{\partial x_1}\hat{f}^a_{(X_{1}, T_{2}) }\left(  y , t_\rho\right)dy \right. -  \int_{-\infty}^{B_2} \frac{\partial\bar{F}_{\mathbf{X}}((x_1,B_2),t_{k-1}\vert(B_1,y),t_\rho)}{\partial x_1}\hat{f}^a_{(X_{2}, T_{1}) }\left( y , t_\rho\right)dy \nonumber\\
&+r_1 \sum_{u_1=1}^{m_1} \frac{\partial\bar{F}_{\mathbf{X}}((x_1,B_2),t_{k-1}\vert( y_{u_1},B_2),t_\rho)}{\partial x_1}\tilde{f}^a_{(X_{1}, T_{2}) }\left(y_{u_1}, t_\rho\right)  \left. + r_2\sum_{u_2=1}^{m_2} \frac{\partial\bar{F}_{\mathbf{X}}((x_1,B_2),t_{k-1}\vert(B_1,y_{u_2}),t_\rho)}{\partial x_1}\tilde{f}^a_{(X_{2}, T_{1}) }\left(y_{u_2} , t_\rho\right)\right]=0. \label{3a}
\end{align}
\end{subequations}
Then Lemma \ref{lemma1} follows by subtracting \eqref{3a} from \eqref{2a} and setting $x_i=y_{u_i}$, for $i=1,2$. The error $E^{(2)}_{k,u_1}$ in \eqref{errorEb} is obtained in analogous way.\qed

\section*{Appendix D: Proof of Theorem \ref{error}}\label{appE}
At first we study the error $E^{(i)}_{k,u_i}$ due to the spatial discretization. It can be decomposed as
\begin{equation}\label{E}
E^{(i)}_{k,u_i}=A ^{(i)}_{k,u_i}- B^{(i)}_{k,u_i}, \qquad k=1,\ldots, N.
\end{equation}
Here, $A^{(i)}_{k,u_i}$ has the same expression as $E^{(i)}_{k,u_i}$ in (\ref{errorE}), replacing $\tilde{f}^a_{(X_{i},T_{j})}\left( y, t_j\right)$ with $\hat{f}^a_{(X_{i},T_{j})}\left( y, t_j\right)$. Moreover, $B_{k,u_i}^{(i)}$ is defined by
\begin{subequations}\label{B}
\begin{align}
 B^{(1)}_{k,u_1}&=\sum_{\rho=0}^{k-1}\left[ r_1 \sum_{v_1=1}^{m_1} K_{1,k,\rho}((y_{u_1},B_2),(y_{v_1},B_2))E^{(1)}_{\rho,v_1}+r_2 \sum_{v_2=1}^{m_2} K_{1,k,\rho}((y_{u_1},B_2),(B_1,y_{v_2}))E^{(2)}_{\rho,v_2}
 \right],\label{Ba}\\
B^{(2)}_{k,u_2}&=\sum_{\rho=0}^{k-1}\left[r_1 \sum_{v_1=1}^{m_1} K_{2,k,\rho}((B_1, y_{u_2}),(y_{v_1},B_2))E^{(1)}_{\rho,v_1}+r_2 \sum_{v_2=1}^{m_2} K_{2,k,\rho}((B_1, y_{u_2}),(B_1,y_{v_2}))E^{(2)}_{\rho,v_2}
 \right].
\end{align}
\end{subequations}
The term $A ^{(i)}_{k,u_i}$ accounts for the approximation of the spatial integrals with finite sums. Hence we can split it into two terms: the first, denoted by $A ^{(i,a)}_{k,u_i}$, accounts for the discretization procedure; the second, denoted by $A ^{(i,b)}_{k,u_i}$, accounts for the truncation of the series. Let us focus on $A^{(1,a)}_{k,u_1}$. By definition, we have
\begin{subequations}\label{Aa}
\begin{align}
 |A^{(1,a)}_{k,u_1}|&=\left|\sum_{\rho=0}^{k-1}\left\{\left[ \int_{-\infty}^{B_1}K_{1,k,\rho}((y_{u_1},B_2),(y,B_2)) 
 \hat{f}^a_{(X_{1}, T_{2}) }( y , t_\rho) dy\right. \nonumber\right. -r_1 \sum_{v_1=1}^{\infty} K_{1,k,\rho}((y_{u_1},B_2),(y_{v_1},B_2))\hat{f}^a_{(X_{1}, T_{2}) }( y_{v_1} , t_\rho)\right]\nonumber\\
 &+\left[ \int_{-\infty}^{B_2} K_{1,k,\rho}((y_{u_1},B_2),(B_1,y))
\hat{f}^a_{(X_{2},T_{1})}\left( y, t_\rho\right)dy \left.\left.-r_2 \sum_{v_2=1}^{\infty} K_{1,k,\rho}((y_{u_1},B_2),(B_1,y_{v_2}))\hat{f}^a_{(X_{2},T_{1})}\left( y_{v_2}, t_\rho\right) 
 \right]\right\} \right|.\label{Aaa}
  \end{align}
\end{subequations}
Considering the terms in the first square brackets in \eqref{Aaa}, we have
\begin{align}
&\left| \int_{-\infty}^{B_1}K_{1,k,\rho}((y_{u_1},B_2),(y,B_2)) 
 \hat{f}^a_{(X_{1}, T_{2}) }( y , t_\rho) dy -r_1 \sum_{v_1=1}^{\infty} K_{1,k,\rho}((y_{u_1},B_2),(y_{v_1},B_2))\hat{f}^a_{(X_{1}, T_{2}) }( y_{v_1} , t_\rho)\right|\nonumber\\
&\leq \left| \int_{B_1-r_1}^{B_1}K_{1,k,\rho}((y_{u_1},B_2),(y,B_2)) 
 \hat{f}^a_{(X_{1}, T_{2}) }( y , t_\rho) dy\right| \leq  h \int_{B_1-r_1}^{B_1}C_{1,1}(y) \left|\hat{f}^a_{(X_{1}, T_{2}) }( y , t_\rho)\right|dy \leq  h r_1 \eta_{1,1},\label{ineqfin}
 \end{align}
where we used assumption (i) and eq. (3.4.5) in \cite{Davis} in the first inequality and assumption (ii) in the second. Then, thanks to regularity condition (ii), it follows that $\hat f^a_{(X_j,T_i)}(y,t_\rho)$ is bounded. Moreover, the integrable function $C_{1,1}(y)$ on the compact  interval $[B_1-r_1,B_1]$ is bounded. Thus $C_{1,1}(y) \vert \hat{f}^a_{(X_{1}, T_{2}) }( y , t_\rho)\vert\leq \eta_{1,1}$ for a positive constant $\eta_{1,1}$, yielding \eqref{ineqfin}.  A similar procedure can be done for the terms in the second square brackets in \eqref{Aaa} and for $|A^{(2,a)}_{k,u_2}|$, obtaining:
\begin{equation}\label{Aa1}
 |A^{(i,a)}_{k,u_i}|\leq(r_1\eta_{i,1} +r_2\eta_{i,2} )\sum_{\rho=0}^{k-1}h =(r_1\eta_{i,1} +r_2\eta_{i,2} )  t_{k}, \qquad i=1,2,
\end{equation}
where $\eta_{l,i}$ are suitable positive constants given by $C_{l,i}|f^a_{(X_i,T_j)}(y,t_\rho)|\leq \eta_{l,i}$, for $i,l,j=1,2, i \neq j$ and $t_k=h k$. \newline
Consider the error $A ^{(i,b)}_{k,u_i}$. Using assumption (i), eq. (3.4.5) in \cite{Davis} and then assumptions (ii), (iii) in sequence, we get
\begin{subequations}\label{Ab}
\begin{align}
 |A^{(1,b)}_{k,u_1}|&=\left|\sum_{\rho=0}^{k-1}\left[r_1 \sum_{v_1=m_1+1}^{\infty} K_{1,k,\rho}((y_{u_1},B_2),(y_{v_1},B_2))\hat{f}^a_{(X_{1}, T_{2}) }( y_{v_1} , t_\rho) \right.\right. \left. \left.+r_2 \sum_{v_2=m_2+1}^{\infty} K_{1,k,\rho}((y_{u_1},B_2),(B_1,y_{v_2}))\hat{f}^a_{(X_{2},T_{1})}\left( y_{v_2}, t_\rho\right)  \right]\right|
\nonumber\\
&\leq \left|\sum_{\rho=0}^{k-1}h\left[\int_{-\infty}^{B_1-m_1 r_1} C_{1,1}(y)\hat{f}^a_{(X_{1}, T_{2}) }( y , t_\rho)dy + \int_{-\infty}^{B_2-m_2 r_2} C_{1,2}(y)\hat{f}^a_{(X_{2},T_{1})}\left( y, t_\rho\right)dy  \right]\right| \leq (\psi_{1,1} r_1+\psi_{1,2}r_2)t_{k},\label{Aba}\\
|A^{(2,b)}_{k,u_2}| &\leq (\psi_{2,1} r_1+\psi_{2,2} r_2)t_{k}.\label{Abb}
\end{align}
\end{subequations}
From (\ref{Aa1}), (\ref{Ab}) and $r=\max(r_1,r_2)$, we get $|A^{(i)}_{k,u_i}|\leq r G_i t_{k}$, where $G_i, i=1,2$ are suitable positive constants.
Using this bound in (\ref{E}) and observing that $B_{k,u_i}^{(i)}$ in (\ref{B}) involves the errors $E^{(i)}_{\rho,v_i}$ for $0\leq \rho \leq k-1$, we get a system of inequalities
\begin{subequations}\label{E1}
\begin{align}
|E^{(1)}_{k,u_1}|&\leq G_1 r t_{k}+r\sum_{\rho=0}^{k-1}\left[ \sum_{v_1=1}^{m_1} |K_{1,k,\rho}((y_{u_1},B_2),(y_{v_1},B_2))||E^{(1)}_{\rho,v_1}| + \sum_{v_2=1}^{m_2} |K_{1,k,\rho}((y_{u_1},B_2),(B_1,y_{v_2}))||E^{(2)}_{\rho,v_2}| \right],\\
 |E^{(2)}_{k,u_2}|&\leq G_2 r t_{k}+r\sum_{\rho=0}^{k-1}\left[ \sum_{v_1=1}^{m_1} |K_{2,k,\rho}((B_1, y_{u_2}),(y_{v_1},B_2))||E^{(1)}_{\rho,v_1}|+ \sum_{v_2=1}^{m_2} |K_{2,k,\rho}((B_1, y_{u_2}),(B_1,y_{v_2}))||E^{(2)}_{\rho,v_2}|
\right].
\end{align}
\end{subequations}
We extend the method proposed in \cite{CMV} to the system \eqref{E1}, that we iteratively solve:
\begin{align}
|E_{0,u_i}^{(i)}|&=0:=r p_0^{(i)};\nonumber\\
|E_{1,u_i}^{(i)}|&\leq G_1 r t_1=:r p_1^{(i)};\nonumber\\
|E_{2,u_1}^{(1)}|&\leq G_1 r t_2+r\left[ r p_1^{(1)} \sum_{v_1=1}^{m_1} |K_{1,k,\rho}((y_{u_1},B_2),(y_{v_1},B_2))| +r p_1^{(2)} \sum_{v_2=1}^{m_2} |K_{1,k,\rho}((y_{u_1},B_2),(B_1,y_{v_2}))|
 \right]\nonumber\\
 &\leq r \left[ G_1 t_2+r \beta_{1,1} p_1^{(1)} +r\beta_{1,2} p_1^{(2)} \right]=:r p_2^{(1)},\label{disEi}\\
|E_{2,u_2}^{(2)}|&\leq r\left[ G_2 t_2+ r \beta_{2,1} p_1^{(1)} +r\beta_{2,2} p_1^{(2)} \right]:=r p_2^{(2)},\nonumber
\end{align}
where \eqref{disEi} holds due to assumption (ii) and eq. (3.4.5) in \cite{Davis}. Here $\beta_{i,l}, i,l=1,2$ are suitable  positive  constants, which depend neither on $r$ nor on $h$. Iterating this procedure, (\ref{E1}) becomes
\begin{equation}\label{Ep}
|E^{(i)}_{k,u_i}|\leq r\left[G_i t_{k}+ r\sum_{\rho=0}^{k-1} \left( \beta_{i,1} p_{\rho}^{(1)}+\beta_{i,2} p_{\rho}^{(2)}\right)  \right]:=r p_{k}^{(i)}, \qquad i=1,2.\end{equation}
Since $t_k\leq \Theta$, from \eqref{Ep} it follows
$$
p_k^{(i)}\leq G_i \Theta +r\sum_{\rho=0}^{k-1} \left(\beta_{i,1} p_\rho^{(1)}+\beta_{i,2}p_\rho^{(2)}\right), \qquad i=1,2.
$$
Then, by eq. (7.18) in \cite{Li}, we get $p_k^{(i)}\leq G_i\Theta \exp[(\beta_{i,1}+\beta_{i,2})rt_k] $. Therefore
\begin{equation}\label{E_G}
|E^{(i)}_{k,u_i}|\leq r G_i \Theta\exp\left[(\beta_{i,1}+\beta_{i,2})rt_k\right], \qquad i=1,2
\end{equation}
implying $\vert E^{(i)}_{k,u_i}\vert=\textrm{O}(r)$. 

\noindent Consider now the time discretization error $e^{(i)}_k(y_{u_i})$, focusing on $e^{(1)}_k(y_{u_1})$. The error formulas for the Euler method are
\begin{eqnarray}\label{delta}
\delta_{1,1,k}(h)&=\frac{ht_k}{2} \int_{-\infty}^{B_1} \left. \frac{\partial}{\partial t}\bar{F}_{\mathbf{X}}((y_{u_1},B_2),t_k\vert(y,B_2),t)f^a_{(X_{1}, T_{2})}\left( y, t\right)dy \right|_{t=\tau}; \\
\delta_{1,2,k}(h)&=\frac{ht_k}{2} \int_{-\infty}^{B_2} \left.  \frac{\partial}{\partial t} \bar{F}_{\mathbf{X}}(( y_{u_1},B_2),t_k\vert(B_1,y),t)f^a_{(X_{2}, T_{1})}\left( y, t\right)dy \right|_{t=\tau};\nonumber\\
\delta_{2,1,k}(h)&=\frac{ht_k}{2} \int_{-\infty}^{B_1} \left. \frac{\partial}{\partial t} \bar{F}_{\mathbf{X}}((B_1, y_{u_2}),t_k\vert(y,B_2),t)f^a_{(X_{1}, T_{2})}\left( y, t\right)dy\right|_{t=\tau};\nonumber\\ 
\delta_{2,2,k}(h)&=\frac{ht_k}{2}\int_{-\infty}^{B_2} \left.  \frac{\partial}{\partial t}\bar{F}_{\mathbf{X}}((B_1, y_{u_2}),t_k\vert(B_1,y),t)f^a_{(X_{2}, T_{1})}\left( y, t\right)dy\right|_{t=\tau}\nonumber,
\end{eqnarray}
where $\tau \in (0,\Theta)$ and $t_k=hk$. Rewriting (\ref{Volterra1discret}) with the corresponding residuals, and evaluating it in $x_1=y_{u_1}$, we get 
\begin{subequations}
\begin{align}
 \bar{F}_{\mathbf{X}}((y_{u_1},B_2),t_k) &=h\sum_{\rho=0}^k \int_{-\infty}^{B_1} \bar{F}_{\mathbf{X}}((y_{u_1},B_2),t_k\vert( y,B_2),t_\rho){f}^a_{(X_{1}, T_{2}) }\left(  y , t_\rho\right)dy\nonumber\\
&+ h\sum_{\rho=0}^k \int_{-\infty}^{B_2} \bar{F}_{\mathbf{X}}((y_{u_1},B_2),t_k\vert(B_1,y),t_\rho){f}^a_{(X_{2}, T_{1}) }\left( y , t_\rho\right) dy+\delta_{1,1,k}(h)+\delta_{1,2,k}(h).
\label{Volterra1discret resid}
  \end{align}
\end{subequations}
Subtracting (\ref{Volterra1discret}) from (\ref{Volterra1discret resid}) and differentiating with respect to $y_{u_1}$, we get the integral equation for $e^{(1)}_\rho(y)$ 
\begin{subequations}\label{e}
\begin{align}
 -\frac{\partial}{\partial y_{u_1}}\left[\delta_{1,1,k}(h)+\delta_{1,2,k}(h)\right]=h\sum_{\rho=0}^{k} \left[\frac{\partial}{\partial y_{u_1}}\int_{-\infty}^{B_1}\bar{F}_{\mathbf{X}}((y_{u_1},B_2),t_k\vert(y,B_2),t_\rho) e^{(1)}_\rho(y)dy + \frac{\partial}{\partial y_{u_1}}\int_{-\infty}^{B_2}\bar{F}_{\mathbf{X}}((y_{u_1},B_2),t_k\vert(B_1, y),t_\rho) e^{(2)}_\rho(y)dy\right]. \label{ea}
\end{align}
\end{subequations}
Rewriting \eqref{ea} with respect to $k-1$, subtracting it from \eqref{ea} and using \eqref{Flimit}, we obtain
\begin{subequations}\label{e1}
\begin{align}
e^{(1)}_k(y_{u_1})&-\sum_{\rho=0}^{k-1}\left[ \int_{-\infty}^{B_1}K_{1,k,\rho}((y_{u_1},B_2),(y,B_2)) e^{(1)}_\rho(y)dy+\int_{-\infty}^{B_2}K_{1,k,\rho}((y_{u_1},B_2),(B_1,y)) e^{(2)}_\rho(y)dy\right]\nonumber\\
&=\frac{\partial}{\partial y_{u_1}}\left[\frac{(\delta_{1,1,k}(h)-\delta_{1,1,k-1}(h))+(\delta_{1,2,k}(h)-\delta_{1,2,k-1}(h))}{h}\right].\label{e1a}
\end{align}
\end{subequations}
Using \eqref{kernel}, \eqref{delta} and assumption (v), and since $t_{k-1}=t_k-h$, we have
\begin{eqnarray*}
\frac{\partial}{\partial y_{u_1}}\left|\delta_{1,1,k}(h)-\delta_{1,1,k-1}(h)\right|&\leq&\frac{ht_{k}}{2} \left. \int_{-\infty}^{B_1} \frac{\partial}{\partial t} \left[\vert K_{1,k,t}((y_{u_1},B_2),(y,B_2))\vert 
\vert f^a_{(X_{1}, T_{2})}\left( y, t\right)\vert  dy \right]\right|_{t=\tau}\\
&+&\frac{h^2}{2}\left|\frac{\partial}{\partial y_{u_1}}\left.\int_{-\infty}^{B_1} \frac{\partial}{\partial t}\left[\bar{F}_{\mathbf{X}}((y_{u_1},B_2),t_{k-1}\vert (y,B_2),t)f^a_{(X_{1}, T_{2})}\left( y, t\right)dy\right]\right|_{t=\tau}\right|\\
&\leq& \frac{h^2 }{2} \left[ t_{k} Q_{1,1} +S_{1,1}\right]:=\frac{h^2}{2} \alpha_{1,1}.
\end{eqnarray*}
The last inequality holds applying assumptions (ii) and (iv) on the first term, and assumption (v) on the second term, for a  suitable positive constant $S_{1,1} $.
Then \eqref{e1} becomes
\begin{subequations}\label{e2}
\begin{align}
|e^{(1)}_k(y_{u_1})|&\leq\frac{(\alpha_{1,1}+\alpha_{1,2})ht_{k}}{2}+\sum_{\rho=0}^{k-1}\left[\int_{-\infty}^{B_1}\left|K_{1,k,\rho}((y_{u_1},B_2),(y,B_2)) e^{(1)}_\rho(y)\right|dy+ \int_{-\infty}^{B_2}\left|K_{1,k,\rho}((y_{u_1},B_2),(B_1,y)) e^{(2)}_\rho(y)\right|dy\right],\label{e2a}\\
|e^{(2)}_k(y_{u_2})|&\leq\frac{(\alpha_{2,1}+\alpha_{2,2})ht_{k}}{2}+\sum_{\rho=0}^{k-1}\left[ \int_{-\infty}^{B_1}\left|K_{2,k,\rho}((B_1, y_{u_2}),(y,B_2)) e^{(1)}_\rho(y)\right|dy+\int_{-\infty}^{B_2}\left|K_{2,k,\rho}((B_1, y_{u_2}),(B_1,y)) e^{(2)}_\rho(y)\right|dy\right], \label{e2b}
\end{align}
\end{subequations}
where \eqref{e2b} is obtained as \eqref{e2a}. Setting $\gamma_l=\max\{\alpha_{l,1},\alpha_{l,2}\}, l=1,2$, we can  iteratively solve \eqref{e2}  for $k\geq 0$:
\begin{eqnarray*}
\vert e_0^{(i)}\vert &=&0:=h q_0^{(i)};\\
\vert e_1^{(i)}\vert &\leq & \gamma_i h t_1:= h q_1^{(i)};\\
\vert e_2^{(1)}\vert &\leq & \gamma_1 h t_2 +h q_1^{(1)}\int_{-\infty}^{B_1} \vert K_{1,k,\rho}((y_{u_1},B_2),(y,B_2))\vert dy+ h q_1^{(2)}\int_{-\infty}^{B_2} \vert K_{1,k,\rho}((y_{u_1},B_2),(B_1,y))\vert dy\\
&\leq& h\left (\gamma_1 t_2  + h \xi_{1,1} q_1^{(1)}+ h  \xi_{1,2} q_1^{(2)} \right) := h q_2^{(1)},\\
\vert e_2^{(2)}\vert &\leq &  h\left (\gamma_2 t_2  + h \xi_{2,1} q_1^{(1)} + h \xi_{2,2} q_1^{(2)}  \right) := h q_2^{(2)},
\end{eqnarray*}
where we used assumption (ii) to bound $e_2^{(i)}$. Here $\xi_{i,l}, i,l=1,2 $ are suitable positive constants independent on $h$ and $r$. In general
\begin{equation}\label{e3}
|e^{(i)}_k(y_{u_i})|\leq h \left[\gamma_i t_{k}+h\left( \xi_{i,1}\sum_{\rho=0}^{k-1}q_\rho^{(1)}+ \xi_{i,2}\sum_{\rho=0}^{k-1}q_\rho^{(2)}\right)\right]:=hq_k^{(i)}, \qquad i=1,2. 
\end{equation}
Since $t_k \leq \Theta$, from \eqref{e3} it follows
\[
q_k^{(i)}\leq \gamma_i \Theta + h\left( \xi_{i,1} \sum_{\rho=0}^{k-1} q_\rho^{(1)}+ \xi_{i,2} \sum_{\rho=0}^{k-1} q_\rho^{(2)}\right), \qquad i=1,2,
\]
and applying eq. (7.18) in \cite{Li}, we get 
$q_k^{(i)}\leq \gamma_i \Theta \exp[(\xi_{i,1}+\xi_{i,2})t_k], i=1,2$ and thus 
\begin{subequations}\label{e_G}
\begin{align}
|e^{(i)}_k(y_{u_i})|&\leq h \gamma_i \Theta\exp\left[(\xi_{i,1}+\xi_{i,2})t_k\right], \qquad i=1,2.
\end{align}
\end{subequations}
The theorem follows by noting that $|e^{(i)}_k(y_{u_i})|=O(h)$. 

\section*{Appendix E: Proof of Lemma \ref{corfa}}\label{appG}
Consider $j=1, i=2$. When $x_{1}\rightarrow B_{1}$, both $f^a_{X_1}$  and $f^a_{\mathbf{X}}$ go to zero, due to the boundary condition \eqref{abs}. Therefore $f^a_{X_2\vert X_1}$ is indefinite. From the definition of $\phi$, we have that $\phi \to \alpha$ when $x_1\to B_1$, and thus $\sin(n\pi\phi/\alpha)\to 0$ and $H(\bar{r},\bar{r}_0,\phi,\phi_0,t)\to 0$. Moreover, 
\[
\left[ 1-\exp \left( \frac{2(B_{1}-x_{01})(x_{1}-B_{1})}{\sigma
_{1}^{2}t}\right) \right] \to 0, 
\]
 when $x_1\to B_1$. Hence, the last two terms in \eqref{facondXWiener} produce an indefinite form. Applying l'H\'o{}pital's rule, we obtain
\[
\lim_{x_1\rightarrow B_{1}}\frac{\sin \left( \frac{n\pi \phi }{\alpha }%
\right) }{1-\exp \left(\frac{2(B_{1}-x_{01})(x_{1}-B_{1})}{\sigma _{1}^{2}t%
}\right) }= 
\frac{\sigma _{1}\sigma _{2}\pi \sqrt{1-\rho ^{2}}t}{2\alpha
(B_{2}-x_2)(B_{1}-x_{01})}n\delta_2.\nonumber
\]
The result follows by plugging this ratio into \eqref{facondXWiener}. The density $f^a_{X_1|T_2}$ is derived in the same way, noting that $\phi\to 0$ when $x_2\to B_2$, and the coefficient $\delta_1=1$ instead of $\delta_2$ is obtained applying  l'H\'o{}pital's rule.

\end{document}